\newcommand{\diam}{\mathsf{diam}}
\newcommand{\width}{\mathsf{width}}
\newcommand{\faces}{\text{\rm faces}}
\newcommand{\dmin}{\displaystyle\min}
\newcommand{\dinf}{\displaystyle\inf}
\newcommand{\dmax}{\displaystyle\max}
\newcommand{\1}{\mathbf{1}}
\newcommand{\ip}[2]{\left\langle #1 , #2 \right\rangle}    % inner product
\newcommand{\R}{{\mathbb R}}
\newcommand{\B}{{\mathbb B\,}}
\newcommand{\conv}{{\mathsf{conv}}}
\newcommand{\dist}{\mathsf{dist}}
\newcommand{\dom}{{\mathrm{dom}}}
\newcommand{\dargmin}{\displaystyle \argmin}
\DeclareMathOperator*{\Argmin}{Argmin}
\DeclareMathOperator*{\Argmax}{Argmax}
\DeclareMathOperator*{\argmin}{argmin}
\DeclareMathOperator*{\argmax}{argmax}
\newtheorem{lemma}{Lemma}
\newtheorem{theorem}{Theorem}
\newtheorem{proposition}{Proposition}
\newtheorem{corollary}{Corollary}
\newtheorem{example}{Example}
\newtheorem{definition}{Definition}
\def\transp{^{\text{\sf T}}}
\newcommand{\matr}[1]{\begin{bmatrix} #1 \end{bmatrix}}    % matrix 
\title{The condition of a function relative to a polytope} 
\author{
David H. Gutman\thanks{Department of Mathematical Sciences, 
Carnegie Mellon University, USA, {\tt dgutman@andrew.cmu.edu}}
\and
Javier F. Pe\~na\thanks{Tepper School of Business,
Carnegie Mellon University, USA, {\tt jfp@andrew.cmu.edu}}
}
\begin{document}

\maketitle

\begin{abstract}
The {\em condition number} of a smooth convex function, namely the ratio of its smoothness to strong convexity constants,  is closely tied to fundamental properties of the function.  In particular, 
the condition number of a quadratic convex function is precisely the square of the {\em diameter-to-width} ratio of a canonical ellipsoid associated to the function.  Furthermore, the condition number of a function bounds the linear rate of convergence of the gradient descent algorithm for unconstrained minimization. 

We propose a condition number of a smooth convex function relative to a {\em reference polytope.}  This relative condition number is defined as the ratio of a {\em relative smooth constant} to a {\em relative strong convexity constant} of the function, where both constants are relative to  the reference polytope.  The relative condition number extends the main properties of the traditional condition number.  In particular, we show that the condition number of a quadratic convex function relative to a polytope is precisely the square of the {\em diameter-to-facial-distance} ratio of a scaled polytope for a canonical scaling  induced by the function.  Furthermore, we illustrate how the relative condition number of a function bounds the linear rate of convergence of first-order methods for minimization of the function over the polytope.

\end{abstract}

%\newpage

%%%%%%%%%%%%%%%%%%%%%%%%%%%%%%%%%%%%%%%%%%%%%%%%%%%%%%%%%%%%%%%%%%%%%%%%%%%%%%%%%%%%%%%
%
%Introduction
%
%%%%%%%%%%%%%%%%%%%%%%%%%%%%%%%%%%%%%%%%%%%%%%%%%%%%%%%%%%%%%%%%%%%%%%%%%%%%%%%%%%%%%%%
\section{Introduction}
\label{sec.intro}

Let $f:\R^m\to\R\cup\{\infty\}$ be a convex function.  The {\em condition number} of $f$ is the ratio $\frac{L_f}{\mu_f}$ where  $L_f$ and $\mu_f$ are  respectively the {\em smoothness} and {\em strong convexity} constants of the function $f$ as detailed in Definition~\ref{def.regular} below.  The condition number $\frac{L_f}{\mu_f}$ is closely tied to a number of fundamental properties of the function $f$.  In the special case when $f$ is a quadratic convex function the condition number has the following geometric insight.  Suppose $f(u) = \frac{1}{2}\ip{Qu}{u} + \ip{b}{u}$ where $Q$ is symmetric and positive definite.  Then the condition number $\frac{L_f}{\mu_f}$ is precisely the square of the aspect ratio of the ellipsoid $Q^{1/2}\B:= 
\{Q^{1/2}u :u\in \R^m, \|u\|\le 1\}.$

The condition number $\frac{L_f}{\mu_f}$ also bounds the linear convergence rate of the gradient descent algorithm for the unconstrained minimization problem 
$$f^\star = \dmin_{u\in\R^m} f(u).$$ 
%is known to converge linearly with rate of convergence bounded by $\frac{L_f}{\mu_f}$.   
More precisely, for a suitable choice of step sizes the iterates $u_k, \; k=0,1,\dots$ generated by the gradient descent algorithm satisfy
\[
f(u_k) - f^\star \le \left(1 - \frac{\mu_f}{L_f} \right)^k(f(u_0) - f^\star).
\]
%An accelerated version of the gradient descent algorithm, like the one described in~\cite{Nest04}, generates iterates that satisfy
%\[
%f(u_k) - f^\star \le C\cdot\left(1 - \sqrt{\frac{\mu_f}{L_f}} \right)^k(f(u_0) - f^\star)
%f(u_k) - f^\star \le L_f\left(1 - \sqrt{\frac{\mu_f}{L_f}} \right)^k\|u_0- u^\star\|^2,
%\]
%where $u^\star = \argmin_{u\in \R^m} f(u)$.
%for a constant $C$.
The  references~\cite{BubeLS15,DrusFR16,KariV17,MaGV17,NecoNG18,Nest04,Nest13}, among others, discuss the above type of linear convergence and a number of interesting related developments.  In particular, Necoara, Nesterov and Glineur~\cite{NecoNG18} establish linear convergence properties for a wide class of first-order methods under assumptions that are weaker than strong convexity.

\medskip

Let $f:\R^m\to\R\cup\{\infty\}$ and $A\in\R^{m\times n}$ be such that $\conv(A)\subseteq \dom(f).$  We propose a relative smoothness constant $L_{f,A}$ and  a relative strong convexity constant $\mu_{f,A}$ of the function $f$ relative to the reference polytope $\conv(A)$.  See Definition~\ref{def.relative} below for details.    
Our main results highlight the tight connection between the relative constants and geometric features of the polytope $\conv(A)$.  In particular, we establish some interesting relationship between the relative smoothness and strong convexity constants and the diameter and {\em facial distance} of the polytope.  The facial distance  was introduced by Pe\~na and Rodr\'iguez~\cite{PenaR16} albeit in a more restricted context.  These relationships in turn enables us to show that the relative condition number $\frac{L_{f,A}}{\mu_{f,A}}$ extends some of the main properties of the classical condition number $\frac{L_{f}}{\mu_{f}}$.
In particular, we provide the following interesting geometric insight on the relative condition number when $f$ is quadratic.  Suppose  $f(u) = \frac{1}{2} \ip{Qu}{u} + \ip{b}{u}$ where $Q$ is symmetric and positive definite.  As we detail in Section~\ref{sec.relcond} below, in this case the relative condition number $\frac{L_{f,A}}{\mu_{f,A}}$ is precisely the square of the ratio of the diameter to the facial distance of the polytope $\conv(Q^{1/2}A)$.  For a general convex function, we show that the relative condition number $\frac{L_{f,A}}{\mu_{f,A}}$ can be bounded above by the product of the classical condition number $\frac{L_{f}}{\mu_{f}}$ and the square of the ratio of the diameter to the facial distance of $\conv(A)$.  The latter quantity can be seen as a kind of condition number of the polytope $\conv(A)$.

We also illustrate how the relative condition number bounds the  linear convergence rate of first-order methods for the minimization problem 
\begin{equation}\label{eq.Prob}
f^\star = \dmin_{u\in\conv(A)} f(u).
\end{equation}
More precisely, we show that the iterates generated by both the Frank-Wolfe algorithm with away steps and a version of the projected gradient algorithm have objective values that convergence linearly to $f^\star$ with linear convergence rate bounded by $\frac{L_{f,A}}{\mu_{f,A}}$.
We should note that the linear convergence of the Frank-Wolfe algorithm with away steps and the projected gradient algorithm, as well as of other first-order methods had been previously established in~\cite{BeckS15,LacoJ15,NecoNG18,Nest13,PenaR16} under various kinds of assumptions.  Our approach based on the relative condition number yields a proof of linear convergence for the Frank-Wolfe algorithm with away steps that is significantly shorter and simpler than the ones previously presented in~\cite{BeckS15,LacoJ15,PenaR16}.  Our approach also reveals  some simple ideas at the root of the proofs of linear convergence properties of these first-order algorithms.

The relative constants $L_{f,A}, \mu_{f,A}$ are defined {\em globally.}  In particular, they do not depend on any specific point in $\conv(A)$.   We also consider a version of the {\em quadratic functional growth constant} $\mu_{f,A}^\star$ following the construction of Necoara, Nesterov and Glineur~\cite[Definition 4]{NecoNG18}.  Unlike $\mu_{f,A}$, the constant $\mu_{f,A}^\star$  depends explicitly on the set of minimizers of $f$ on $\conv(A)$.  The constant $\mu_{f,A}^\star$ can be seen as a refinement of $\mu_{f,A}$.  It is always the case that $\mu_{f,A}^\star$ is larger, and can be quite a bit larger, than $\mu_{f,A}$.  Indeed, we show that for some important classes of non-strongly convex functions the constant $\mu_{f,A}^\star$ is positive while $\mu_{f,A}$ may not be.  (See Theorem~\ref{thm.loc}.)

Our work draws on and relates to the recent articles~\cite{BeckS15,LacoJ15,LuFN17,NecoNG18,PenaR16}.  Our  construction of $L_{f,A}$ and $\mu_{f,A}$ is in the spirit introduced by Lu, Freund, and Nesterov~\cite{LuFN17}.  Lu et al.~\cite{LuFN17}  extend the concepts of smoothness and strong convexity constants by considering them {\em relative} to a {\em reference} function, see~\cite[Definition 1.1 and 1.2]{LuFN17}.  %The relative constants become the classical ones when the reference function is the square of the underlying norm in $\R^m$.  
Our construction of $L_{f,A}$ and $\mu_{f,A}$ is also related to the {\em curvature constant} $C^A_f$ and {\em geometric strong convexity constant} $\mu^A_f$ proposed by Lacoste-Julien and Jaggi in~\cite[Appendix C]{LacoJ15}.  The quadratic functional growth constant, as well as other more restrictive growth constants, were proposed by Necoara, Nesterov, and Glineur~\cite{NecoNG18} to give conditions that ensure the linear convergence of first-order methods.  A similar quadratic growth approach was also used by Beck and Shtern~\cite{BeckS15} to established the linear convergence of a conditional gradient algorithm with away steps for non-strongly convex functions.  In contrast to the approaches in~\cite{BeckS15,LacoJ15,NecoNG18,PenaR16}, our construction of the relative constants applies to any choice of norm in $\R^n$.  Our results reveal interesting geometric insights when this norm is the $\ell_1$ norm.  Our construction of the relative constants $L_{f,A}, \; \mu_{f,A}, \; \mu_{f,A}^\star$ and all of our results concerning them scale appropriately, that is, they scale by $\lambda$ whenever the objective function $f$ is replaced by $\tilde f = \lambda f$ for some constant $\lambda > 0$.  In particular,  the relative condition number $\frac{L_{f,A}}{\mu_{f,A}}$ and all of our bounds on it are invariant under positive scaling of $f$.

The remainder of the paper is organized as follows. Section \ref{sec.relcond}, the main section of the paper, presents our central ideas and results, namely the construction of relative smoothness and strong convexity constants and their main properties. Section \ref{sec.algos} illustrates how the relative condition number bounds the linear rate of convergence of the Frank-Wolfe algorithm with away steps and of the projected gradient algorithm for problem~\eqref{eq.Prob}.

%Section \ref{sec.refine} discusses future refinements and extensions of the new condition numbers.

%%%%%%%%%%%%%%%%%%%%%%%%%%%%%%%%%%%%%%%%%%%%%%%%%%%%%%%%%%%%%%%%%%%%%%%%%%%%%%%%%%%%%%%
%
%Condition number relative to a polytope
%
%%%%%%%%%%%%%%%%%%%%%%%%%%%%%%%%%%%%%%%%%%%%%%%%%%%%%%%%%%%%%%%%%%%%%%%%%%%%%%%%%%%%%%%
\section{Condition number relative to a polytope}
\label{sec.relcond}

This section presents the central ideas of this paper.  We introduce the relative smoothness and relative strong convexity of a function relative to a polytope and establish their main properties.  We will use the following notation.  Let $\Delta_{n-1} \subseteq \R^n$ denote the standard simplex, that is,  $\Delta_{n-1} := \{x\in\R^n_+: \|x\|_1 = 1\}$.  For convenience, we will make the following slight abuse of notation.  For $A\in \R^{m\times n}$ we will also write $A$ to denote the set of columns of $A$.  The precise meaning of $A$ will be clear from the context.

For $A\in\R^{m\times n}$ consider the following polytope generated by $A$  $$\conv(A):=\{Ax:x\in\Delta_{n-1}\}.$$ 

\medskip

For $u\in \conv(A)$ let $Z(u) := \{z\in \Delta_{n-1}: Az = u\}$. Suppose $\R^n$ is endowed with a norm $\|\cdot\|$.  For $x\in \Delta_{n-1}$ and $u\in \conv(A)$ define $$\dist(x,Z(u)) := \dmin_{z\in Z(u)} \|x-z\|.$$

%
%The Relative Lipschitz and Strong Convexity Constants
%
\subsection{The relative Lipschitz and strong convexity constants}
\label{subsec.def}

To motivate our main construction we first recall the classical notion of smoothness and strong convexity constants.

\begin{definition}\label{def.regular}{\em Suppose $\R^m$ is endowed with a norm $\|\cdot\|$ and $f:\R^m \rightarrow \R \cup \{\infty\}$ is a differentiable convex function.  

\begin{itemize}
\item[(a)] The function $f$ is $L_f$-smooth on $S \subseteq \dom(f)$ for some constant $L_f>0$ if for all $u,v\in S$
\[
f(v) \le f(u) + \ip{\nabla f(u)}{v-u} + \frac{L_f}{2}\cdot \|v-u\|^2.
\]
\item[(b)] The function $f$ is $\mu_f$-strongly convex  on $S \subseteq \dom(f)$ for some constant $\mu_f \ge 0$ if for all $u,v\in S$
\[
f(v) \ge f(u) + \ip{\nabla f(u)}{v-u} + \frac{\mu_f}{2}\cdot \|v-u\|^2.
\]
\end{itemize}
}
\end{definition}

Next, we present our main construction.

\begin{definition}\label{def.relative}{\em Let $\R^m$ be endowed with a norm $\|\cdot\|$, $A\in \R^{m\times n}$ have at least two different columns, and $f:\R^m \rightarrow \R \cup \{\infty\}$ be a differentiable convex function such that $\conv(A) \subseteq \dom(f)$.

\begin{itemize}
\item[(a)] Define the smoothness constant $L_{f,A}$ of $f$ relative to $A$ as follows
\[
L_{f,A} := \sup_{u\in \conv(A)\atop x\in \Delta_{n-1}\setminus Z(u)} \frac{2(f(Ax) - f(u) - \ip{\nabla f(u)}{Ax-u})}{\dist(x,Z(u))^2}.
\]
\item[(b)] Define the strong convexity constant $\mu_{f,A}$  of $f$ relative to $A$ as follows
\[
\mu_{f,A} := \inf_{u\in \conv(A)\atop x\in \Delta_{n-1}\setminus Z(u)} \frac{2(f(Ax) - f(u) - \ip{\nabla f(u)}{Ax-u})}{\dist(x,Z(u))^2}.
\]

\end{itemize}
}
\end{definition}

The relative constants are natural extensions of the classical ones.  Observe that a differentiable function $f:\R^m\rightarrow \R\cup \{\infty\}$ is $L_f$-smooth and $\mu_f$-strongly convex on $S\subseteq \dom(f)$ if and only if for all $u,v\in S$
\[
\frac{\mu_f}{2}\cdot\|v-u\|^2\leq f(v)-f(u)- \ip{\nabla f(u)}{v-u}\leq\frac{L_f}{2}\cdot\|v-u\|^2.
\]
Likewise, a differentiable function $f:\R^m\rightarrow \R\cup \{\infty\}$ is $L_{f,A}$-smooth and $\mu_{f,A}$-strongly convex on $\conv(A)\subseteq \dom(f)$ for $A\in \R^{m\times n}$ if and only if for all $u\in \conv(A)$ and $x\in \Delta_{n-1}$
\[%\begin{equation}\label{eq.muLrel}
\frac{\mu_{f,A}}{2} \cdot \dist(x,Z(u))^2 \le f(Ax) - f(u) - \ip{\nabla f(u)}{Ax-u} \le \frac{L_{f,A}}{2} \cdot \dist(x,Z(u))^2.
\]%\end{equation}

The relative smoothness and strong convexity constants  $L_{f,A}$ and $\mu_{f,A}$ are closely related to the {\em curvature constant} $C^A_f$ and {\em geometric strong convexity constant} $\mu^A_f$ proposed by Lacoste-Julien and Jaggi in~\cite[Appendix C]{LacoJ15}.  However, the construction in~\cite[Appendix C]{LacoJ15} follows a fairly different path.  In particular, the definition of $\mu^A_f$ is tied to some variants of the Frank-Wolfe algorithm for problem~\eqref{eq.Prob}.  By contrast, our construction of $L_{f,A}$ and $\mu_{f,A}$ depends only on the pair $(f,A)$, applies to any norm in $\R^n$, and does not depend on any particular algorithm.  As we discuss in Section~\ref{sec.algos}, the relative condition number $\frac{L_{f,A}}{\mu_{f,A}}$  bounds the linear rates of convergence of the projected gradient algorithm and of the Frank-Wolfe algorithm with away steps for problem~\eqref{eq.Prob}.  
We also note that in the special case when $\R^n$ is endowed with the Euclidean norm $\ell_2$, the strong convexity constant $\mu_{f,A}$ is related to the quadratic gradient growth condition defined in~\cite{NecoNG18}.

%
%Geometric Properties of the Relative Constants
%
\subsection{Geometric properties of the relative constants}
\label{subsec.prop}
We next present some geometric properties of the constants $L_{f,A}$ and $\mu_{f,A}$. The properties below show that these constants are finite and positive when $f$ is $L_f$-smooth and $\mu_f$-strongly convex for some $L_f,\mu_f>0$.  The properties below also yield a nice analogy between the relative condition number $\frac{L_{f,A}}{\mu_{f,A}}$ and the usual condition number $\frac{L_f}{\mu_f}$. 

\medskip

Our results rely on the concept of facial distance introduced by Pe\~na and Rodr\'iguez~\cite{PenaR16}. Let $\R^m$ be endowed with a norm $\|\cdot\|$.  For $A\in \R^{m\times n}$ with at least two different columns  the {\em facial distance} $\Phi(A)$ is defined as follows
\[
\Phi(A) := \dmin_{F \in \faces(\conv(A))\atop \emptyset \ne F \ne \conv(A)} \dist(F,\conv(A\setminus F)).
\]
Here %$A\setminus F$ denotes the set of columns of $A$ that are not contained in $F$.  The expression 
$\faces(\conv(A))$ denotes the set of faces of $\conv(A)$ and $\dist(F,G) = \dinf_{u\in F, v\in G} \|u-v\|$ for nonempty $F,G\subseteq\R^m$.   Observe that $\Phi(A) > 0$ for all $A\in \R^{m\times n}$ with at least two different columns.

The following example illustrates the facial distance for two canonical polytopes, namely the standard simplex and the $\ell_1$ unit ball. 
% In Section~\ref{sec.spectratope} below we extend this example to two  related spectratopes, namely the spectraplex and the unit ball of the nuclear norm.

%The value of the facial distance in each case follow via a straightforward calculation.  (For further details, see~\cite{}.)
\begin{example}\label{example.polytopes}
{\em Suppose $m > 1$ and $\R^m$ is endowed with the Euclidean norm $\|\cdot\|_2$.

\begin{itemize}
\item[(a)] For $A = I_m \in \R^{m\times m}$ we have $\conv(A) = \Delta_{m-1}$. 
% and $\Phi(A) = \left\{\begin{array}{ll}
% \frac{2}{\sqrt{m}} & \text{ if } m \text{ is even } \\
% \frac{2}{\sqrt{m-\frac{1}{m}}}   & \text{ if } m \text{ is odd. } 
% \end{array} \right.$
In this case $\Phi(A)$ is attained at any face of $\conv(A)$ of dimension $k:=\lfloor{\frac{m}{2} }\rfloor $.  In particular for $F = 
\conv\{e_1,\dots,e_k\}$ we get
\begin{align*}
\Phi(A) &=
\dist(F,\conv(A\setminus F)) 
%\\ & 
= \left\|\frac{e_1+\cdots +e_k}{k} - \frac{e_{k+1}+\cdots +e_m}{m-k}\right\|_2 
\\ &= \sqrt{\frac{m}{k(m-k)}}
\\ &= \left\{\begin{array}{ll}
 \frac{2}{\sqrt{m}} & \text{ if } m \text{ is even } \\
 \frac{2}{\sqrt{m-\frac{1}{m}}}   & \text{ if } m \text{ is odd. } 
 \end{array} \right.
\end{align*}
 
\item[(b)] For $A = \matr{I_m & -I_m}$ we have $\conv(A) = \{u\in\R^m: \|u\|_1 \le 1\}$. % and $\Phi(A) = \frac{1}{\sqrt{m-1}}$.  
In this case $\Phi(A)$ is attained at any face of $\conv(A)$ of dimension $(m-2)$.  In particular, for $F = \conv\{e_1,\dots,e_{m-1}\}$ we get
\[
\Phi(A) =
\dist(F,\conv(A\setminus F)) = \left\|\frac{e_1+\cdots +e_{m-1}}{m-1} - 0\right\|_2  = \frac{1}{\sqrt{m-1}}.
\]
\end{itemize}
}
\end{example}

Some of the results below are stated in terms of the {\em diameter} of a set defined as follows.  For $A\subseteq \R^m$
\[
\diam(A):= \sup_{u,v\in A} \|u-v\|.
\]
The alternative characterizations of $\diam(A)$ and $\Phi(A)$ in the following proposition provide the crux for the geometric properties of $L_{f,A}$ and $\mu_{f,A}$.  We defer the proof of Proposition~\ref{prop.diam.dist} to Section~\ref{sec.proofs} since it relies on Lemma~\ref{lemma.error.bound.loc} below. 

\begin{proposition}\label{prop.diam.dist}
Suppose $\R^n$ is endowed with the $\ell_1$ norm $\|\cdot\|_1$. Then for all $A\in \R^{m\times n}$ with at least two different columns
\[
\diam(A) = \max_{u\in \conv(A)\atop x\in \Delta_{n-1}\setminus Z(u)} \frac{2\|Ax - u\|}{\dist(x,Z(u))},  \; 
\Phi(A) = \min_{u\in \conv(A)\atop x\in \Delta_{n-1}\setminus Z(u)} \frac{2\|Ax - u\|}{\dist(x,Z(u))}.
\] 
\end{proposition}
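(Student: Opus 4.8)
The plan is to analyze a single quantity, the quotient $r(u,x):=\dfrac{2\|Ax-u\|}{\dist(x,Z(u))}$ ranging over admissible pairs $u\in\conv(A)$, $x\in\Delta_{n-1}\setminus Z(u)$, and to obtain the two identities by bounding $r$ from both sides and then exhibiting pairs that attain the bounds. The first move is a normalization. Given an admissible pair, pick $z\in Z(u)$ with $\|x-z\|_1=\dist(x,Z(u))$ and put $w:=x-z\ne 0$. Since $x,z\in\Delta_{n-1}$ one has $\1\transp w=0$, so the positive and negative parts of $w$ satisfy $s:=\|w^+\|_1=\|w^-\|_1>0$ and $\|w\|_1=2s$; then $p:=w^+/s$ and $q:=w^-/s$ lie in $\Delta_{n-1}$ and $Ax-u=Aw=s(Ap-Aq)$, whence
\[
r(u,x)=\frac{2s\,\|Ap-Aq\|}{2s}=\|Ap-Aq\|.
\]
In particular $r(u,x)\le\diam(A)$ for every admissible pair, so the claimed maximum is $\le\diam(A)$.

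For the reverse inequality $r(u,x)\ge\Phi(A)$ the hypothesis that $z$ is a \emph{nearest} point of $Z(u)$ to $x$ is indispensable (without it $\|Ap-Aq\|$ could be driven to $0$); I would obtain it from the error bound supplied by Lemma~\ref{lemma.error.bound.loc}, namely $\dist(x,Z(u))\le\frac{2}{\Phi(A)}\,\|Ax-u\|$ for all $u\in\conv(A)$ and $x\in\Delta_{n-1}$, which rearranges to $r(u,x)\ge\Phi(A)$ and hence gives ``$\ge\Phi(A)$'' for the claimed minimum. I expect this error bound to be the crux of the whole argument; it is precisely why the proof must be postponed until Section~\ref{sec.proofs}.

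It remains to produce attaining pairs. For the diameter: the maximum of the convex map $(u,v)\mapsto\|u-v\|$ over $\conv(A)\times\conv(A)$ is attained at a pair of vertices of $\conv(A)$, and each vertex of $\conv(A)$ is a column of $A$, so there are distinct columns $a_p,a_q$ with $\|a_p-a_q\|=\diam(A)$; take $x=e_p$, $u=a_q$. Since $a_q$ is an extreme point, every $z\in Z(a_q)$ is supported on $\{k:a_k=a_q\}$, hence $z_p=0$; then $e_p$ and $z$ have disjoint supports, so $\|e_p-z\|_1=2$, giving $\dist(e_p,Z(a_q))=2$ and $e_p\notin Z(a_q)$, and therefore $r(u,x)=\|a_p-a_q\|=\diam(A)$. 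For the facial distance: choose a proper nonempty face $F$ of $\conv(A)$ with $\dist(F,\conv(A\setminus F))=\Phi(A)$, attained at $\bar u\in F$ and $\bar v\in\conv(A\setminus F)$. Because $F$ is a face and $\bar u\in F$, every $z\in Z(\bar u)$ is supported on the set of columns lying in $F$; on the other hand $\bar v\in\conv(A\setminus F)$ can be written as $Ax$ with $x\in\Delta_{n-1}$ supported on the columns \emph{not} in $F$. Thus $x$ and every $z\in Z(\bar u)$ have disjoint supports, so $\|x-z\|_1=2$, giving $\dist(x,Z(\bar u))=2$ and $x\notin Z(\bar u)$ (as $\bar v\ne\bar u$); taking $u=\bar u$ yields $r(u,x)=\|\bar v-\bar u\|=\Phi(A)$. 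Combined with the two bounds of the previous paragraphs, this establishes both identities.
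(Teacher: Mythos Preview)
Your proof is correct and follows essentially the same route as the paper's. The paper handles the $\diam(A)$ identity via a short chain of equalities rather than your ``upper bound plus attaining pair'' format, but the underlying content (the $w=s(p-q)$ normalization and the fact that $\dist(x,Z(u))$ replaces $\|x-y\|_1$ without changing the supremum) is identical; for $\Phi(A)$ both arguments invoke Lemma~\ref{lemma.error.bound.loc} for the lower bound and construct the same face-based attaining pair with disjoint supports.
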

%\begin{proof}  This is a special case of Proposition~\ref{prop.diam.spec.dist} in Section~\ref{sec.spectratope}.
%\end{proof}

\begin{corollary}\label{the.corol}
Suppose $\R^n$ and $\R^m$ are respectively endowed with the $\ell_1$ norm $\|\cdot\|_1$ and the $\ell_2$ norm $\|\cdot\|_2$, and $f(u) = \frac{1}{2} \ip{Qu}{u}+\ip{b}{u}$ for some $b\in\R^m$ and $Q\in\R^{m\times m}$ symmetric and positive definite.  Then for all $A\in \R^{m\times n}$ with at least two different columns
\[
L_{f,A} = \frac{\diam(Q^{1/2}A)^2}{4}, \; \mu_{f,A} = \frac{\Phi(Q^{1/2}A)^2}{4}.
\] 
In particular 
\begin{equation}\label{cond.rel}
\frac{L_{f,A}}{\mu_{f,A}} = \frac{\diam(Q^{1/2}A)^2}{\Phi(Q^{1/2}A)^2}.
\end{equation}
\end{corollary}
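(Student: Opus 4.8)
The plan is to reduce the computation of $L_{f,A}$ and $\mu_{f,A}$ to the geometric quantities in Proposition~\ref{prop.diam.dist}, exploiting that for a quadratic $f$ the Bregman-type expression appearing in Definition~\ref{def.relative} is an exact square.  First I would record that, since $\nabla f(u) = Qu + b$ and the second-order Taylor expansion of a quadratic is exact,
\[
f(Ax) - f(u) - \ip{\nabla f(u)}{Ax-u} = \frac{1}{2}\ip{Q(Ax-u)}{Ax-u} = \frac{1}{2}\left\|Q^{1/2}(Ax-u)\right\|_2^2
\]
holds for all $u\in\conv(A)$ and $x\in\Delta_{n-1}$.  Substituting this identity into Definition~\ref{def.relative} turns $L_{f,A}$ and $\mu_{f,A}$ into the supremum and infimum, respectively, of $\|Q^{1/2}(Ax-u)\|_2^2\big/\dist(x,Z(u))^2$ over $u\in\conv(A)$ and $x\in\Delta_{n-1}\setminus Z(u)$.

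Next I would perform the change of variables $w := Q^{1/2}u$ and write $B := Q^{1/2}A$.  Because $Q^{1/2}$ is invertible, $u$ ranges over $\conv(A)$ exactly when $w$ ranges over $\conv(B)$; moreover $Az = u$ if and only if $Bz = w$, so the fiber $Z(u) = \{z\in\Delta_{n-1}: Az=u\}$ coincides with the analogous fiber $Z_B(w) := \{z\in\Delta_{n-1}: Bz=w\}$ of $B$ at $w$; and $Q^{1/2}(Ax-u) = Bx - w$.  Hence
\[
L_{f,A} = \sup_{w\in\conv(B)\atop x\in\Delta_{n-1}\setminus Z_B(w)} \frac{\|Bx-w\|_2^2}{\dist(x,Z_B(w))^2},
\qquad
\mu_{f,A} = \inf_{w\in\conv(B)\atop x\in\Delta_{n-1}\setminus Z_B(w)} \frac{\|Bx-w\|_2^2}{\dist(x,Z_B(w))^2}.
\]
Since $A$ has at least two distinct columns and $Q^{1/2}$ is injective, $B$ also has at least two distinct columns, so Proposition~\ref{prop.diam.dist} applies to $B$ (with the $\ell_1$ norm on $\R^n$ and the $\ell_2$ norm on $\R^m$).

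Finally, because the ratios above are precisely the squares of the ratios in Proposition~\ref{prop.diam.dist}, I can pull the square outside the supremum and infimum (both quantities being nonnegative) and invoke that proposition to obtain $L_{f,A} = \big(\diam(B)/2\big)^2$ and $\mu_{f,A} = \big(\Phi(B)/2\big)^2$, i.e.\ the stated formulas with $B = Q^{1/2}A$; dividing then yields~\eqref{cond.rel}.  There is essentially no genuine obstacle here: the only points that deserve a line of care are that the quadratic's Bregman divergence is an exact square (which is what lets the square come out of the supremum and infimum) and that $u\mapsto Q^{1/2}u$ is a bijection carrying the fibers $Z(u)$ associated with $A$ onto the fibers associated with $Q^{1/2}A$, so that the index sets of the two optimization problems match up exactly.
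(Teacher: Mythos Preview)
Your proposal is correct and is exactly the intended derivation: the paper states this result as an immediate corollary of Proposition~\ref{prop.diam.dist} without giving a separate proof, and your argument---compute the Bregman residual as $\tfrac{1}{2}\|Q^{1/2}(Ax-u)\|_2^2$, change variables to $B=Q^{1/2}A$ and $w=Q^{1/2}u$ (so that the fibers $Z(u)$ and $Z_B(w)$ coincide), then apply Proposition~\ref{prop.diam.dist} to $B$---is precisely the natural way to fill in the details.
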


Corollary~\ref{the.corol} yields the following  analogy between the relative condition number $\frac{L_{f,A}}{\mu_{f,A}}$ and the usual condition number $\frac{L_f}{\mu_f}$ of a strongly convex quadratic function $f$.  Under the assumptions of Corollary~\ref{the.corol} it readily follows that
\[
L_f = \lambda_{\max}(Q) = \lambda_{\max}(Q^{1/2})^2, \;\; \mu_f = \lambda_{\min}(Q) = \lambda_{\min}(Q^{1/2})^2.
\]
Furthermore, observe that $2\lambda_{\max}(Q^{1/2})$ and $2\lambda_{\min}(Q^{1/2})$ are respectively the diameter (length of longest principal axis) and the width (length of shortest principal axis) of the ellipsoid $\{Q^{1/2} u: \|u\|_2 \le 1\} = Q^{1/2} \B,$ where $\B := \{u\in\R^m: \|u\|_2\le 1\}.$ Therefore,
\[
L_f = \frac{\diam(Q^{1/2}\B)^2}{4}, \;\; \mu_f = \frac{\width(Q^{1/2}\B)^2}{4}.
\]
In particular,
\begin{equation}\label{cond.usual}
\frac{L_f}{\mu_f} = \frac{\diam(Q^{1/2}\mathbb{B})^2}{\width(Q^{1/2}\mathbb{B})^2}.
\end{equation}
Observe the striking resemblance between~\eqref{cond.rel} and \eqref{cond.usual}.

\begin{corollary}\label{prop.bounds} 
Let $\R^n$ be endowed with the $\ell_1$ norm $\|\cdot\|_1$.
Suppose $A \in \R^{m\times n}$ has at least two different columns and $f:\R^m \rightarrow \R\cup\{\infty\}$ is $L_f$-smooth and $\mu_f$-strongly convex  on $\conv(A)$. Then
\[
L_{f,A} \le \frac{L_f \cdot\diam(A)^2}{4}, \; \mu_{f,A} \ge \frac{\mu_f \cdot \Phi(A)^2}{4}.
\] 
In particular, $$\frac{L_{f,A}}{\mu_{f,A}} \le\frac{L_f}{\mu_f}\cdot \frac{\diam(A)^2}{\Phi(A)^2}.$$
\end{corollary}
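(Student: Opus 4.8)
The plan is to derive both bounds directly from the pointwise smoothness and strong convexity inequalities for $f$, combined with the geometric identities of Proposition~\ref{prop.diam.dist}.

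First I would fix an arbitrary admissible pair $u\in\conv(A)$, $x\in\Delta_{n-1}\setminus Z(u)$ and set $v := Ax$. Since $x\in\Delta_{n-1}$ we have $v\in\conv(A)$, so $u$ and $v$ both lie in $\conv(A)$, the set on which $f$ is assumed $L_f$-smooth and $\mu_f$-strongly convex. The equivalent two-sided form of Definition~\ref{def.regular} recorded just after Definition~\ref{def.relative} then gives
\[
\frac{\mu_f}{2}\|Ax-u\|^2 \;\le\; f(Ax)-f(u)-\ip{\nabla f(u)}{Ax-u} \;\le\; \frac{L_f}{2}\|Ax-u\|^2 .
\]
Dividing through by $\dist(x,Z(u))^2$, which is strictly positive because $Z(u)$ is closed and $x\notin Z(u)$, produces the chain
\[
\mu_f\,\frac{\|Ax-u\|^2}{\dist(x,Z(u))^2} \;\le\; \frac{2\bigl(f(Ax)-f(u)-\ip{\nabla f(u)}{Ax-u}\bigr)}{\dist(x,Z(u))^2} \;\le\; L_f\,\frac{\|Ax-u\|^2}{\dist(x,Z(u))^2}.
\]

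The main step is then to invoke Proposition~\ref{prop.diam.dist}. With $\R^n$ carrying the $\ell_1$ norm, which is the standing hypothesis of the corollary, that proposition shows that for every admissible pair $(u,x)$
\[
\Phi(A) \;\le\; \frac{2\|Ax-u\|}{\dist(x,Z(u))} \;\le\; \diam(A),
\]
hence $\Phi(A)^2/4 \le \|Ax-u\|^2/\dist(x,Z(u))^2 \le \diam(A)^2/4$. Substituting these into the previous display sandwiches the difference quotient that defines $L_{f,A}$ and $\mu_{f,A}$ between $\mu_f\,\Phi(A)^2/4$ and $L_f\,\diam(A)^2/4$ for every admissible $(u,x)$. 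Taking the supremum over all such pairs yields $L_{f,A}\le L_f\,\diam(A)^2/4$; taking the infimum yields $\mu_{f,A}\ge\mu_f\,\Phi(A)^2/4$. Dividing the first inequality by the second, and noting $\mu_{f,A}>0$ because $\mu_f>0$ and $\Phi(A)>0$, gives the claimed bound on $\frac{L_{f,A}}{\mu_{f,A}}$.

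I do not expect a genuine obstacle: once Proposition~\ref{prop.diam.dist} is in hand, the corollary is a routine sandwich-and-pass-to-sup/inf argument. The only point requiring a little care is checking that the hypotheses of Definition~\ref{def.regular} apply with $S=\conv(A)$, which is exactly the set containing both $u$ and $Ax$, so the elementary inequalities can be used verbatim without any localization.
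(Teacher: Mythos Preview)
Your proposal is correct and is exactly the argument the paper has in mind: the corollary is stated without proof precisely because it follows immediately from Proposition~\ref{prop.diam.dist} together with the two-sided smoothness/strong-convexity inequality, via the sandwich-and-pass-to-sup/inf step you describe.
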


The following proposition gives an identity and bound similar to those in Proposition~\ref{prop.diam.dist} for the general case when $\R^n$ is endowed with an arbitrary norm.  We defer the proof of Proposition~\ref{prop.dist.gral} to Section~\ref{sec.proofs}.

\begin{proposition}\label{prop.dist.gral}
Let $\R^n$ and $\R^m$ be endowed with arbitrary norms and $A \in \R^{m\times n}$ have at least two different columns. Then 
\begin{equation}\label{eq.general.diam}
\max_{w\in \R^n \setminus \{0\}\atop \ip{\1}{w} = 0}
\frac{\|Aw\|}{\|w\|} = \max_{u\in \conv(A)\atop x\in \Delta_{n-1}\setminus Z(u)} \frac{\|Ax - u\|}{\dist(x,Z(u))},
\end{equation}
and
\begin{equation}\label{eq.general.Phi}
\frac{\Phi(A)}{\dmax_{i=1,\dots,n} \|e_i\|} \le \dmin_{u\in \conv(A)\atop x\in \Delta_{n-1}\setminus Z(u)} \frac{2\|Ax - v\|}{\dist(x,Z(u))}.
\end{equation} 
\end{proposition}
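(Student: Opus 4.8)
The plan is to treat the two assertions of the proposition separately. I would prove the identity \eqref{eq.general.diam} directly, by passing between ``directions'' $w$ with $\ip{\1}{w}=0$ and admissible pairs $(u,x)$, i.e.\ pairs with $u\in\conv(A)$ and $x\in\Delta_{n-1}\setminus Z(u)$; I would then derive the bound \eqref{eq.general.Phi} from the $\ell_1$ identity of Proposition~\ref{prop.diam.dist} together with an elementary comparison between the given norm on $\R^n$ and the $\ell_1$ norm.

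For \eqref{eq.general.diam} I would establish the two inequalities between the maxima. For ``$\ge$'' (the right-hand side dominates the left), fix $w\in\R^n\setminus\{0\}$ with $\ip{\1}{w}=0$ and split it componentwise as $w=w^+-w^-$ with $w^+:=\max(w,0)\ge 0$ and $w^-:=\max(-w,0)\ge 0$; these have disjoint supports and, since $\ip{\1}{w}=0$, a common $\ell_1$-mass $t:=\ip{\1}{w^+}=\ip{\1}{w^-}$, with $t>0$ because $w\ne 0$. Put $x:=w^+/t\in\Delta_{n-1}$, $z:=w^-/t\in\Delta_{n-1}$ and $u:=Az\in\conv(A)$, so that $Ax-u=A(x-z)=Aw/t$. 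If $Aw\ne 0$ then $x\notin Z(u)$, and since $z\in Z(u)$ we get $\dist(x,Z(u))\le\|x-z\|=\|w\|/t$, hence $\frac{\|Ax-u\|}{\dist(x,Z(u))}\ge\frac{\|Aw\|}{\|w\|}$; if $Aw=0$ the left side is $0$ and there is nothing to prove (the right-hand index set is nonempty since $A$ has at least two distinct columns). Taking the supremum over $w$ yields ``$\ge$''. For ``$\le$'', fix an admissible pair $(u,x)$ and let $z^\ast\in Z(u)$ attain $\dist(x,Z(u))=\|x-z^\ast\|$ (attained because $Z(u)$ is a nonempty compact subset of $\Delta_{n-1}$); then $w:=x-z^\ast$ satisfies $w\ne 0$, $\ip{\1}{w}=1-1=0$ and $Ax-u=Aw$, so $\frac{\|Ax-u\|}{\dist(x,Z(u))}=\frac{\|Aw\|}{\|w\|}$ is bounded by the left-hand maximum. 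Since the left-hand maximum is attained (it is the maximum of a continuous function over the compact unit sphere of the subspace $\{w\in\R^n:\ip{\1}{w}=0\}$), combining ``$\ge$'' and ``$\le$'' shows the two sides of \eqref{eq.general.diam} coincide and that the right-hand supremum is a maximum.

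For \eqref{eq.general.Phi}, the key elementary fact is $\|w\|\le\bigl(\max_i\|e_i\|\bigr)\|w\|_1$ for every $w\in\R^n$, immediate from the triangle inequality applied to $w=\sum_i w_i e_i$. Consequently, writing $\dist_1$ for the $\ell_1$ distance on $\R^n$, we get $\dist(x,Z(u))\le\bigl(\max_i\|e_i\|\bigr)\dist_1(x,Z(u))$ for all $u\in\conv(A)$ and $x\in\Delta_{n-1}$. On the other hand, the second identity of Proposition~\ref{prop.diam.dist}, applied with the $\ell_1$ norm on $\R^n$, gives $2\|Ax-u\|\ge\Phi(A)\,\dist_1(x,Z(u))$ for all $u\in\conv(A)$ and $x\in\Delta_{n-1}\setminus Z(u)$. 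Chaining the two displays yields $\frac{2\|Ax-u\|}{\dist(x,Z(u))}\ge\frac{\Phi(A)}{\max_i\|e_i\|}$, and taking the infimum over all admissible $(u,x)$ establishes \eqref{eq.general.Phi}.

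I do not expect a serious obstacle; the argument is essentially bookkeeping once the change of variables in \eqref{eq.general.diam} is set up. The two points needing care are: (i) verifying that the pair $(u,x)$ built in the ``$\ge$'' direction genuinely lies in the admissible index set, i.e.\ that $x\notin Z(u)$, which is exactly the condition $Aw\ne 0$, together with the harmless treatment of the degenerate case $Aw=0$ (both sides then vanish, consistent because the right-hand index set is nonempty); and (ii) using the correct direction of the norm comparison in \eqref{eq.general.Phi}, namely $\dist\le(\max_i\|e_i\|)\,\dist_1$, so that it produces a lower bound on the ratio rather than an upper one.
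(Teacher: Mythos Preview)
Your proposal is correct and follows essentially the same approach as the paper: for \eqref{eq.general.diam} the paper simply records the chain $\max_{u,x}\frac{\|Ax-u\|}{\dist(x,Z(u))}=\max_{x,y\in\Delta_{n-1},\,x\ne y}\frac{\|A(x-y)\|}{\|x-y\|}=\max_{w\ne 0,\,\1^\top w=0}\frac{\|Aw\|}{\|w\|}$, and your two inequalities (via the nearest $z^\ast\in Z(u)$ in one direction and the positive/negative split $w=w^+-w^-$ in the other) are exactly the justification behind those equalities; for \eqref{eq.general.Phi} both you and the paper use the elementary bound $\|x-y\|\le(\max_i\|e_i\|)\|x-y\|_1$ and then invoke Proposition~\ref{prop.diam.dist}. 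One small wording slip: in your closing remark~(i) you say ``both sides then vanish'' for the case $Aw=0$, but only the term $\|Aw\|/\|w\|$ vanishes, not the right-hand maximum---your main argument already handles this correctly.
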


\begin{corollary}\label{prop.bounds.gral} 
Let $\R^n$ and $\R^m$ be endowed with arbitrary norms.  
Suppose $A \in \R^{m\times n}$ has at least two different columns and $f:\R^m \rightarrow \R\cup\{\infty\}$ is $L_f$-smooth and $\mu_f$-strongly convex on $\conv(A)$.  Then
\[%\begin{equation}\label{eq.general.cond}
L_{f,A} \le 
L_f \cdot \dmax_{w\in \R^n \setminus \{0\}\atop \ip{\1}{w} = 0}
\frac{\|Aw\|^2}{\|w\|^2}, \;\;
\mu_{f,A} \ge \frac{\mu_f \cdot \Phi(A)^2}{4 \dmax_{i=1,\dots,n} \|e_i\|^2}, \; 
\]%\end{equation}
In particular, $$\frac{L_{f,A}}{\mu_{f,A}} \le \frac{L_f}{\mu_f}\cdot\frac{4\dmax_{i=1,\dots,n} \|e_i\|^2}{\Phi(A)^2}\cdot\dmax_{w\in \R^n \setminus \{0\}\atop \ip{\1}{w} = 0}
\frac{\|Aw\|^2}{\|w\|^2}.$$
\end{corollary}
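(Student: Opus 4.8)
The plan is to obtain both estimates by unwinding Definition~\ref{def.relative} and bounding the common numerator $f(Ax) - f(u) - \ip{\nabla f(u)}{Ax-u}$ using the smoothness and strong convexity of $f$ on $\conv(A)$, and then to convert the resulting ratios $\|Ax-u\|/\dist(x,Z(u))$ into geometric quantities via Proposition~\ref{prop.dist.gral}.

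First I would fix an arbitrary admissible pair, i.e.\ $u \in \conv(A)$ and $x \in \Delta_{n-1}\setminus Z(u)$, and note that both $Ax$ and $u$ lie in $\conv(A)$. Applying Definition~\ref{def.regular}(a) with $v = Ax$ gives the pointwise bound $f(Ax) - f(u) - \ip{\nabla f(u)}{Ax-u} \le \tfrac{L_f}{2}\|Ax-u\|^2$, so each quotient in the supremum defining $L_{f,A}$ is at most $L_f\,\|Ax-u\|^2/\dist(x,Z(u))^2$. Passing to the supremum over all admissible pairs and using identity~\eqref{eq.general.diam} to identify $\sup \|Ax-u\|/\dist(x,Z(u))$ with $\max_{w \neq 0,\, \ip{\1}{w}=0}\|Aw\|/\|w\|$, I obtain the claimed bound on $L_{f,A}$ (which, incidentally, also shows $L_{f,A} < \infty$).

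The lower bound on $\mu_{f,A}$ is symmetric: for the same admissible pair, Definition~\ref{def.regular}(b) gives $f(Ax) - f(u) - \ip{\nabla f(u)}{Ax-u} \ge \tfrac{\mu_f}{2}\|Ax-u\|^2$, so each quotient in the infimum defining $\mu_{f,A}$ is at least $\mu_f\,\|Ax-u\|^2/\dist(x,Z(u))^2$. Since $t\mapsto t^2$ is nondecreasing on $[0,\infty)$, taking the infimum gives $\mu_{f,A}\ge \mu_f\bigl(\inf\|Ax-u\|/\dist(x,Z(u))\bigr)^2$, and bound~\eqref{eq.general.Phi} shows this infimum is at least $\Phi(A)/\bigl(2\max_i\|e_i\|\bigr)$. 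Squaring yields $\mu_{f,A}\ge \mu_f\,\Phi(A)^2/\bigl(4\max_i\|e_i\|^2\bigr)$, which is strictly positive since $\Phi(A)>0$ for any $A$ with two distinct columns. Dividing the two estimates then gives the stated bound on $\frac{L_{f,A}}{\mu_{f,A}}$.

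I do not expect any genuine obstacle here: the substance is already carried by Proposition~\ref{prop.dist.gral}. The only points demanding care are that the numerator estimates must hold uniformly over all admissible $(u,x)$ before one passes to the sup/inf, and that the factor $2$ appearing in~\eqref{eq.general.Phi} must be tracked correctly so that it becomes the factor $4$ in the denominator of the bound on $\mu_{f,A}$ (and, consistently, is absent from the squared-ratio form of the bound on $L_{f,A}$, since~\eqref{eq.general.diam} carries no such factor).
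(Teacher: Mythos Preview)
Your proposal is correct and is exactly the argument the paper intends: the corollary is stated without proof precisely because it follows by combining the smoothness/strong convexity bounds on the numerator in Definition~\ref{def.relative} with the geometric identities/bounds of Proposition~\ref{prop.dist.gral}, just as you outline. Your tracking of the factor $2$ in~\eqref{eq.general.Phi} versus its absence in~\eqref{eq.general.diam} is also right and matches the final constants.
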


%
%The Local Strong Convexity Constant
%
\subsection{A refinement of the relative strong convexity constant}
\label{subsec.loc}
The construction of the constants $L_{f,A}$ and $\mu_{f,A}$ is {\em global} as it depends on $f$ and the entire set $\conv(A)$.   We next describe a {\em local} refinement of the strong convexity constant.  
%As we detail in Section~\ref{sec.algos} below, this refinement in turn allows us to state sharper convergence bounds of first-order methods for polytope constrained optimization.  

\begin{definition}\label{def.local}{\em
Let $\R^m$ be endowed with a norm $\|\cdot\|$, $A\in \R^{m\times n}$ have at least two different columns, and $f:\R^m \rightarrow \R \cup \{\infty\}$ be a differentiable convex function such that $\conv(A) \subseteq \dom(f)$.   Let  $f^\star:= \dmin_{u\in \conv(A)} f(u) = \dmin_{x\in \Delta_{n-1}} f(Ax),\; Z^\star:= \{z\in \Delta_{n-1}: f(Az) = f^\star\}.$ Following~\cite{NecoNG18}, define the quadratic functional growth constant $\mu_{f,A}^\star$ as 
\begin{equation}\label{eq.strong.conv.local}
\mu_{f,A}^\star := \inf_{ x\in \Delta_{n-1}\setminus Z^\star} \frac{2(f(Ax) - f^\star)}{\dist(x,Z^\star)^2}.
\end{equation}
}
\end{definition}

The convexity of $f$ readily implies $\mu_{f,A}^\star \ge \mu_{f,A} \ge 0$. Furthermore, as we next discuss, for an important class of functions $\mu_{f,A}^\star$ is positive while $\mu_{f,A}$ may not be.  Suppose $f:\R^m\rightarrow\R\cup\{\infty\}$ is defined as  $f(u) = g(Eu) + \ip{b}{u}$ 
where $g:\R^p\rightarrow \R\cup\{\infty\}$ is a strongly convex function, 
$b\in \R^m$, and $E\in \R^{p\times m}$.  A  function $f$ of this form is typically not strongly convex and the relative strong convexity constant $\mu_{f,A}$ may be zero as illustrated in Example~\ref{example} below.  On the other hand, for a function $f$ of this form it is always the case that $\mu^\star_{f,A}>0$ as Theorem~\ref{thm.loc} below shows.

Theorem~\ref{thm.loc} gives a lower bound for $\mu^\star_{f,A}$ similar in spirit to the lower bound for $\mu_{f,A}$ in Corollary~\ref{prop.bounds}.  The statement and proof of Theorem~\ref{thm.loc} rely on the concept of local facial distance (an extension of the facial distance) introduced in \cite{PenaR16}.  Suppose $\R^m$ is endowed with a norm $\|\cdot\|$.  For $v \in \R^m$ define $\|\cdot\|_v: \R^{m+1} \rightarrow \R$ as follows. For $\bar u =  \matr{u \\ u_{m+1}} \in \R^{m+1}$ let
\begin{equation}\label{eq.quasinorm}
\|\bar u\|_v := \sqrt{\|u\|^2 + \vert\ip{v}{u} + u_{m+1}\vert}.
\end{equation}
Observe that $\|\bar u\|_v > 0$ if $\bar u \ne 0$.  For nonempty $F,G\subseteq \R^{m+1}$ let
\[
\dist_v(F,G) = \dmin_{\bar u\in F, \bar w\in G}\|\bar u-\bar w\|_v.
\]
For $\bar A \in \R^{(m+1)\times n}$ with at least two different columns and $v \in \R^m$ let $$F(v):=\displaystyle\Argmin_{\bar u\in\conv(\bar A)} \left\langle\matr{v\\1},{\bar u}\right\rangle \in \faces(\conv(\bar A)).$$  
The {\em local facial distance} $\Phi_v(\bar A)$ is defined as follows
\[
\Phi_v(\bar A):=\dmin_{G \in \faces(F(v))\atop \emptyset \ne G \ne \conv(\bar A)} \dist_v(G,\conv(\bar A\setminus G)).
\]
Observe that $\Phi_v(\bar A) > 0$ for all $v\in \R^m$ and $\bar A\in \R^{m\times n}$ with at least two different columns.  Furthermore, the facial distance can be recovered a special case of the local facial distance: Given $A\in \R^{m\times n}$ take $\bar{A}=\matr{A\\ 0} \in \R^{(m+1)\times n}$ and $v=0 \in \R^m$.  In this case, $F(v)=\conv(\bar A) =\conv(A) \times \{0\}$, $\faces(\conv(\bar A)) = \{F\times \{0\}: F\in \faces(\conv(A))\}$, and $\|\bar u \|_v = \|u\|$ for all $\bar u = \matr{u\\0}\in \conv(\bar A)$.  Therefore,
$$\Phi_v(\bar{A})= \dmin_{G \in \faces(F(v))\atop \emptyset \ne G \ne \conv(\bar A)} \dist_v(G,\conv(\bar A\setminus G)) =\dmin_{F \in \faces(\conv(A))\atop \emptyset \ne F \ne \conv(A)} \dist(F,\conv( A\setminus F)) = \Phi(A).$$

\begin{theorem}\label{thm.loc} Suppose $\R^n$ is endowed with the $\ell_1$ norm.  Let $A \in \R^{m\times n}, b \in \R^m$, and $E \in \R^{p\times m}$ be such that $\matr{EA\\ b\transp A}$ has at least two different columns. Let $f:\R^m\rightarrow \R \cup\{\infty\}$ be defined by $f(u) = g(Eu) + \ip{b}{u}$ where $g:\R^p \rightarrow \R\cup\{\infty\}$ is $\mu_g$-strongly convex on $\conv(EA)$ for some $\mu_g>0$.  Then $v = \frac{2}{\mu_g}\nabla g(Eu^\star)$ is the same for all $u^\star \in \displaystyle\Argmin_{u\in \conv(A)} f(u)$ and
\begin{equation}\label{eq.main.thm}
\mu_{f,A}^\star \ge \frac{\mu_g \cdot \Phi_v(\bar A)^2}{4} > 0,
\end{equation}
for $\bar A :=\matr{EA\\ \frac{2}{\mu_g}b\transp A}$.
\end{theorem}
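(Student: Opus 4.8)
The plan is to split the proof into three steps: (i) show that $v$ is the same for every minimizer and single out one point $\bar u^\star\in\conv(\bar A)$ with $\bar Z(\bar u^\star)=Z^\star$ and $\bar u^\star\in F(v)$; (ii) prove the ``growth in image space'' estimate $f(Ax)-f^\star\ge\tfrac{\mu_g}{2}\|\bar A x-\bar u^\star\|_v^2$ for all $x\in\Delta_{n-1}$; and (iii) convert $\|\bar A x-\bar u^\star\|_v$ into $\dist(x,Z^\star)$ via the local error bound behind Proposition~\ref{prop.diam.dist}. For step (i): if $u_1^\star,u_2^\star\in\Argmin_{u\in\conv(A)}f(u)$, then their midpoint lies in $\conv(A)$ so $f$ there is $\ge f^\star$, while $\mu_g$-strong convexity of $g$ applied to $g\!\left(\tfrac12(Eu_1^\star+Eu_2^\star)\right)$ together with linearity of $u\mapsto\ip{b}{u}$ bounds $f$ at the midpoint by $f^\star-\tfrac{\mu_g}{8}\|Eu_1^\star-Eu_2^\star\|^2$; hence $Eu_1^\star=Eu_2^\star$, so $\nabla g(Eu^\star)$, and therefore $v$, does not depend on $u^\star$. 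The same computation gives $\ip{b}{Az}=f^\star-g(Eu^\star)$ for every $z\in Z^\star$, so
\[
\bar u^\star:=\bar A z=\matr{Eu^\star\\ \tfrac{2}{\mu_g}\left(f^\star-g(Eu^\star)\right)}
\]
is independent of $z\in Z^\star$; and a direct check, using the first-order optimality of $u^\star$ (which also forces every minimizer of $f$ to minimize the linear functional $\ip{\nabla f(u^\star)}{\cdot}$ over $\conv(A)$) and the identity $\ip{\matr{v\\1}}{\bar A w}=\tfrac{2}{\mu_g}\ip{\nabla f(u^\star)}{Aw}$, shows $\bar Z(\bar u^\star)=\{w\in\Delta_{n-1}:\bar A w=\bar u^\star\}=Z^\star$ and $\bar u^\star\in F(v)$.

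For step (ii), expand the quasinorm: $\|\bar A x-\bar u^\star\|_v^2=\|EAx-Eu^\star\|^2+\big|\ip{v}{EAx-Eu^\star}+\tfrac{2}{\mu_g}\ip{b}{Ax-u^\star}\big|$. Since $v=\tfrac{2}{\mu_g}\nabla g(Eu^\star)$, the bracketed term equals $\tfrac{2}{\mu_g}\ip{\nabla f(u^\star)}{Ax-u^\star}$, which is $\ge0$ by first-order optimality, so the absolute value drops; and $\mu_g$-strong convexity of $g$ gives $\ip{\nabla g(Eu^\star)}{EAx-Eu^\star}+\ip{b}{Ax-u^\star}\le f(Ax)-f^\star-\tfrac{\mu_g}{2}\|EAx-Eu^\star\|^2$. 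Substituting and cancelling the $\|EAx-Eu^\star\|^2$ terms yields $\|\bar A x-\bar u^\star\|_v^2\le\tfrac{2}{\mu_g}\big(f(Ax)-f^\star\big)$.

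For step (iii), because $\bar u^\star\in F(v)$ and $\bar Z(\bar u^\star)=Z^\star$, I would apply the local error bound — the statement Lemma~\ref{lemma.error.bound.loc} should supply — in the form $\|\bar A x-\bar u\|_v\ge\tfrac12\Phi_v(\bar A)\,\dist(x,\bar Z(\bar u))$, valid for $\bar u\in F(v)$ and $x\in\Delta_{n-1}$ when $\R^n$ carries the $\ell_1$ norm. With $\bar u=\bar u^\star$, combined with step (ii), this gives $f(Ax)-f^\star\ge\tfrac{\mu_g}{2}\cdot\tfrac{\Phi_v(\bar A)^2}{4}\dist(x,Z^\star)^2$ for all $x\in\Delta_{n-1}$, and taking the infimum over $x\in\Delta_{n-1}\setminus Z^\star$ in~\eqref{eq.strong.conv.local} yields $\mu_{f,A}^\star\ge\tfrac{\mu_g\Phi_v(\bar A)^2}{4}$, which is $>0$ because $\Phi_v(\bar A)>0$ and $\mu_g>0$. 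I expect step (iii) to be the main obstacle: it is where the combinatorial structure of the faces of $\conv(\bar A)$ must be controlled, and everything hinges on having the local error bound in exactly this quasinorm form; if Lemma~\ref{lemma.error.bound.loc} is phrased differently, the genuine work is reconciling the two statements and checking carefully that $F(v)$ is the relevant face and that $\bar Z(\bar u^\star)=Z^\star$. Steps (i) and (ii) are elementary once $\bar u^\star$ has been correctly identified.
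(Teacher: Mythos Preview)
Your proposal is correct and follows essentially the same route as the paper's proof: identify a single $\bar u^\star\in F(v)$ with $\{z\in\Delta_{n-1}:\bar Az=\bar u^\star\}=Z^\star$, establish the growth inequality $f(Ax)-f^\star\ge\tfrac{\mu_g}{2}\|\bar Ax-\bar u^\star\|_v^2$, and then invoke Lemma~\ref{lemma.error.bound.loc} (which is stated exactly in the quasinorm form you need, so your concern about step~(iii) is unfounded). The only cosmetic difference is in step~(i): you use the midpoint inequality for strong convexity to force $Eu_1^\star=Eu_2^\star$, whereas the paper uses the monotonicity inequality $\mu_g\|Eu^\star-Eu'\|^2\le\ip{\nabla g(Eu^\star)-\nabla g(Eu')}{Eu^\star-Eu'}$ together with the first-order optimality conditions at both minimizers; both arguments are elementary and interchangeable.
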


Observe that the bound in Theorem~\ref{thm.loc} scales appropriately in the following sense.  Suppose we replace $f(u) = g(Eu) + \ip{b}{u}$ by $
\tilde f(u) := \lambda f(u) = \lambda g(Eu) + \ip{\lambda b}{u} =: \tilde g(Eu) + \langle\tilde b,u\rangle$ for some $\lambda > 0$.  Then $\mu^\star_{\tilde f,A} = \lambda \mu^\star_{f,A}, \; \mu_{\tilde g} = \lambda \mu_g, \; \nabla \tilde g = \lambda \nabla g$ and thus $v$ and $\bar A$ are unchanged.  Therefore all terms in inequality~\eqref{eq.main.thm} scale exactly by $\lambda$.

The following corollary specializes Theorem~\ref{thm.loc} to the special case when the objective function $f$ is a  convex quadratic function.  In that case $\mu_{f,A}^\star$ is positive regardless of the strong convexity of $f$.  

\begin{corollary}\label{corol.mustar} Let $A \in \R^{m\times n}, b\in \R^m$ and $Q\in \R^{m\times m}$ be such that $Q$ is symmetric positive semidefinite and $\matr{Q^{1/2}A\\b\transp A}$ has at least two different columns.  
Let  $f:\R^m\rightarrow \R$ be defined by $f(u) = \frac{1}{2} \ip{Qu}{u} + \ip{b}{u}$.  Then $v = 2Q^{1/2}u^\star$ is the same for all $u^\star \in \displaystyle\Argmin_{u\in \conv(A)} f(u)$ and
$$\mu_{f,A}^\star \ge \frac{\Phi_v(\bar A)^2}{4}>0,$$  
for $\bar A :=\matr{Q^{1/2}A\\ 2b\transp A}$.
\end{corollary}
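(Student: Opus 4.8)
The plan is to obtain Corollary~\ref{corol.mustar} as a direct specialization of Theorem~\ref{thm.loc}, choosing the reference strongly convex function $g$ in that theorem to be the canonical squared Euclidean norm.

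First I would set $p = m$, let $E := Q^{1/2} \in \R^{m\times m}$ be the unique symmetric positive semidefinite square root of $Q$ (which exists precisely because $Q$ is positive semidefinite), and define $g:\R^m\to\R$ by $g(y) := \frac{1}{2}\|y\|_2^2$. Then $g$ is differentiable with $\nabla g(y) = y$ and is $\mu_g$-strongly convex on all of $\R^m$, hence on $\conv(EA) = \conv(Q^{1/2}A)$, with $\mu_g = 1$. Using $Q = Q^{1/2}Q^{1/2}$ and the symmetry of $Q^{1/2}$ one checks that $g(Eu) + \ip{b}{u} = \frac{1}{2}\ip{Q^{1/2}u}{Q^{1/2}u} + \ip{b}{u} = \frac{1}{2}\ip{Qu}{u} + \ip{b}{u} = f(u)$ for every $u\in\R^m$, so $f$ has exactly the form required by Theorem~\ref{thm.loc}.

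Next I would check that the hypotheses of Theorem~\ref{thm.loc} hold for this choice of $E$: the matrix $\matr{EA\\ b\transp A} = \matr{Q^{1/2}A\\ b\transp A}$ has at least two different columns, which is precisely the standing assumption of the corollary. Invoking Theorem~\ref{thm.loc} then yields at once that $v = \frac{2}{\mu_g}\nabla g(Eu^\star) = 2Q^{1/2}u^\star$ is the same for every $u^\star\in\Argmin_{u\in\conv(A)} f(u)$, and that $\mu_{f,A}^\star \ge \frac{\mu_g\,\Phi_v(\bar A)^2}{4} = \frac{\Phi_v(\bar A)^2}{4} > 0$, where $\bar A = \matr{EA\\ \frac{2}{\mu_g}b\transp A} = \matr{Q^{1/2}A\\ 2b\transp A}$ — which is exactly the claimed statement.

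Since this is a straight specialization, there is no substantive obstacle; the only points needing care are purely bookkeeping: confirming that $\frac{1}{2}\|\cdot\|_2^2$ is globally $1$-strongly convex so that $\mu_g = 1$ propagates correctly through the formulas for $v$ and $\bar A$, and verifying that the composition $g\circ E$ reproduces the quadratic term of $f$. It is worth emphasizing that only positive semidefiniteness of $Q$ is used — no definiteness, and hence no strong convexity of $f$ itself, is required — which is the whole point of the corollary.
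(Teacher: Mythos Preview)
Your proposal is correct and is exactly the approach the paper takes: the paper presents Corollary~\ref{corol.mustar} as a direct specialization of Theorem~\ref{thm.loc} with $E=Q^{1/2}$, $g(y)=\tfrac12\|y\|_2^2$, and $\mu_g=1$, and gives no separate proof. Your bookkeeping (the form of $v$, $\bar A$, and the hypothesis on the columns of $\matr{EA\\ b\transp A}$) matches precisely.
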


The next example describes a simple case when $\mu_{f,A}^\star > \mu_{f,A} = 0$.

\begin{example}\label{example}
{\em Let $f:\R^2\rightarrow \R$ be and $A\in \R^{2\times 3}$ be as follows
$$f(s,t) = \frac{1}{2} s^2 + t, \; A = \matr{1 & -1 & 0 \\ 0 & 0 & 1}.$$
For $u = \matr{0 & 0}\transp$ and $x = \matr{0 & 0 & 1}\transp$ we have $f(Ax) - f(u) - \ip{\nabla f(u)}{Ax-u} = 0$ and $\dist(x,Z(u))=2$.  Hence $\mu_{f,A} = 0$.  On the other hand, $u^\star = \matr{0 & 0}\transp$ and 
hence $Z^\star = \{\matr{\frac{1}{2}& \frac{1}{2} & 0}\transp\}.$  Corollary~\ref{corol.mustar} and some straightforward calculations yield the lower bound
\[
\mu_{f,A}^\star \ge \frac{\Phi_0(\bar A)^2}{4} = \dfrac{1}{2},
\]
for $\bar A = \matr{1 & -1 & 0 \\ 0 & 0 & 0 \\ 0 & 0 & 2}$. A more detailed calculation shows that indeed $\mu_{f,A}^\star = \dfrac{1}{2}$.
% is  attained at $x = \matr{0 & 0 & 1}\transp \in \Delta_2\setminus Z^\star$.  Furthermore, since $u^\star = \matr{0 & 0}\transp$, 

}
\end{example}

The proof of Theorem~\ref{thm.loc} relies on Lemma~\ref{lemma.lp.sol.loc} and Lemma~\ref{lemma.error.bound.loc} below.  These lemmas in turn use the following notation.  For $x\in \Delta_{n-1}$, let $I(x):=\{i\in\{1,\dots,n\}: \ip{e_i}{x} > 0\}.$

\begin{lemma}\label{lemma.lp.sol.loc}  Let $\bar{A}\in \R^{(m+1)\times n}$ have at least two different columns and $v\in\R^m$.   Suppose $\bar{u}\in F(v)$ and $x\in \Delta_{n-1}$ are such that $\bar A x \ne \bar u$.  Then for $d:=\frac{\bar{A}x-\bar{u}}{\|\bar{A}x-\bar{u}\|_v}$
\begin{equation}\label{eq.lp.sol.loc}
\Phi_v(\bar{A}) \le \max\{\lambda : \exists y,z\in \Delta_{n-1}, I(y) \subseteq I(x), \bar{A}(y-z) = \lambda d\}.
\end{equation}
\end{lemma}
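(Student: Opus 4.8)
\quad Set $c:=\|\bar A x-\bar u\|_v>0$, so that $\bar A x-\bar u=c\,d$, and let $\lambda^\star$ denote the maximum on the right-hand side of~\eqref{eq.lp.sol.loc}. As $\bar A y$ ranges over $\conv\{\bar A_i:i\in I(x)\}$ when $y\in\Delta_{n-1}$ satisfies $I(y)\subseteq I(x)$, and $\bar A z$ ranges over $\conv(\bar A)$, we have $\lambda^\star=\max\{\lambda\ge 0:\lambda d\in K\}$ for the polytope $K:=\conv\{\bar A_i:i\in I(x)\}-\conv(\bar A)$, so this maximum is attained. Taking $y=x$ and any $z=w\in\Delta_{n-1}$ with $\bar A w=\bar u$ gives $c\,d=\bar A(x-w)\in K$, whence $\lambda^\star\ge c>0$ and, since $0\in K$ and $K$ is convex, $\{td:0\le t\le\lambda^\star\}\subseteq K$. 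It therefore suffices to exhibit $y,z\in\Delta_{n-1}$ with $I(y)\subseteq I(x)$ and $\bar A(y-z)=\lambda d$ for some $\lambda\ge\Phi_v(\bar A)$.

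The construction hinges on the inequality $\ip{\matr{v\\1}}{d}\ge 0$, which holds because $\bar A x\in\conv(\bar A)$ while $\bar u$ minimizes $\bar u'\mapsto\ip{\matr{v\\1}}{\bar u'}$ over $\conv(\bar A)$, so $\ip{\matr{v\\1}}{c\,d}=\ip{\matr{v\\1}}{\bar A x-\bar u}\ge 0$. Hence $\ip{\matr{v\\1}}{\cdot}$ is non-increasing along the ray $t\mapsto\bar u-t\,d$, so this ray stays in $\conv(\bar A)$ only while $\ip{\matr{v\\1}}{\cdot}$ remains at its minimum over $\conv(\bar A)$, that is, only inside $F(v)$. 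Put $t^\star:=\max\{t\ge 0:\bar u-t\,d\in\conv(\bar A)\}$ and $\bar h:=\bar u-t^\star d\in F(v)$, and let $G$ be the minimal face of $F(v)$ containing $\bar h$. A short case analysis on the sign of $\ip{\matr{v\\1}}{d}$ (and on whether $d$ lies in the affine hull of $F(v)$) shows $\emptyset\ne G\ne\conv(\bar A)$, so $G$ is admissible in the definition of $\Phi_v(\bar A)$ and $\Phi_v(\bar A)\le\dist_v\bigl(G,\conv(\bar A\setminus G)\bigr)$.

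For the other endpoint, push $\bar A x$ forward along $d$ as far as possible inside $\conv\{\bar A_i:i\in I(x)\}$: with $s:=\max\{s'\ge 0:\bar A x+s'd\in\conv\{\bar A_i:i\in I(x)\}\}$, the point $\bar g:=\bar A x+s\,d$ lies in $\conv\{\bar A_i:i\in I(x)\}$, hence $\bar g=\bar A y$ for some $y\in\Delta_{n-1}$ with $I(y)\subseteq I(x)$, and $\bar g-\bar h=(c+t^\star+s)\,d=:\lambda\,d$ with $\lambda\ge c$. A direct computation from the definition~\eqref{eq.quasinorm}, using $\ip{\matr{v\\1}}{d}\ge 0$, gives $\|\lambda d\|_v^2=\lambda^2+\lambda(c-\lambda)\,\beta/c^2$ where $\beta:=\ip{\matr{v\\1}}{c\,d}\ge 0$; since $\lambda\ge c$ the last term is $\le 0$, so $\|\bar g-\bar h\|_v\le\lambda$ (with equality when $\ip{\matr{v\\1}}{d}=0$). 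Provided $\bar g\in\conv(\bar A\setminus G)$, we obtain
\[
\Phi_v(\bar A)\ \le\ \dist_v\bigl(G,\conv(\bar A\setminus G)\bigr)\ \le\ \|\bar g-\bar h\|_v\ \le\ \lambda\ \le\ \lambda^\star,
\]
which is~\eqref{eq.lp.sol.loc}.

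The point requiring care — and the step I expect to be the main obstacle — is the hypothesis $\bar g\in\conv(\bar A\setminus G)$: the columns active at $\bar g$ need not avoid the vertex set of $G$, because a boundary face of $\conv\{\bar A_i:i\in I(x)\}$ may meet $G$. When it does, repairing the argument seems to require a finite reduction — splitting $\bar g$ along $G$ and restarting with a smaller active column set or a smaller face — under which $\lambda$ does not decrease. An equivalent route replaces the primal construction by linear programming duality, writing $\lambda^\star=\min\{\max_{i\in I(x)}\ip{p}{\bar A_i}-\min_{1\le j\le n}\ip{p}{\bar A_j}:p\in\R^{m+1},\ \ip{p}{d}=1\}$ and, for each feasible $p$, taking $G:=\Argmin_{\bar u'\in F(v)}\ip{p}{\bar u'}$ — a face of $F(v)$ that is $\ne\conv(\bar A)$ because $\ip{p}{d}\ne 0$ — and bounding $\dist_v(G,\conv(\bar A\setminus G))$ above by the inner objective value at $p$; the difficulty then migrates to exhibiting the corresponding near-optimal pair realizing that $\dist_v$, which is the same bookkeeping obstacle in another guise.
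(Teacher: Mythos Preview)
Your primal ``pushing'' construction can genuinely fail, not merely in bookkeeping. Take $m=2$ with the Euclidean norm, $v=0$, and let the columns of $\bar A\in\R^{3\times 4}$ be the corners of the unit square lifted by a zero last row: $\bar a_1=(0,0,0)$, $\bar a_2=(1,0,0)$, $\bar a_3=(1,1,0)$, $\bar a_4=(0,1,0)$, so that $F(v)=\conv(\bar A)$ and $\|\cdot\|_v=\|\cdot\|_2$ on that affine plane. With $\bar u=(0,\tfrac12,0)$ and $x=\tfrac12 e_1+\tfrac12 e_3$ one gets $c=\tfrac12$, $d=(1,0,0)$, $t^\star=0$, $\bar h=\bar u$, $G=\conv\{\bar a_1,\bar a_4\}$, and $s=0$, $\bar g=(\tfrac12,\tfrac12,0)$. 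Here $\bar g\notin\conv(\bar A\setminus G)=\conv\{\bar a_2,\bar a_3\}$, and worse, your $\lambda=c+t^\star+s=\tfrac12$ lies strictly below $\Phi_v(\bar A)=1/\sqrt 2$, so even the intended chain $\Phi_v(\bar A)\le\|\bar g-\bar h\|_v\le\lambda$ collapses. The true maximizer is $y=e_3$, $z=e_4$ with $\lambda^\star=1$, which your pushing procedure never reaches; the sketched ``restart'' is too vague to get there.

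Your second route via LP duality is the paper's, and contrary to your last sentence the obstacle does \emph{not} migrate there --- it dissolves once you add one constraint. Prove instead the stronger inequality obtained by imposing the additional restriction $I(z)\subseteq J:=\{j:\bar a_j\in F(v)\}$; this is legitimate because $\bar u\in F(v)$ admits a representation $\bar u=\bar A_J z_J$, so the restricted problem is still feasible with value at least $c>0$, and its maximum is no larger than $\lambda^\star$. Write this restricted problem as a primal LP, pass to its dual, and take an optimal pair $(\hat y_I,\hat z_J,\hat\lambda;\,\hat p,\hat t,\hat\tau)$ with $\hat\lambda=\hat t+\hat\tau>0$. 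Complementary slackness now delivers, for free, both memberships you were missing: $\hat z_j>0$ forces $j\in J$ and $\ip{\hat p}{\bar a_j}=-\hat\tau=\min_{\ell\in J}\ip{\hat p}{\bar a_\ell}$, so $\bar A_J\hat z_J\in G:=\Argmin_{\bar w\in F(v)}\ip{\hat p}{\bar w}\in\faces(F(v))$; and $\hat y_i>0$ forces $\ip{\hat p}{\bar a_i}=\hat t>-\hat\tau$, so $\bar a_i\notin G$ and $\bar A_I\hat y_I\in\conv(\bar A\setminus G)$. The restriction to $J$ is exactly the missing idea: it pins $\hat z$ inside $F(v)$ and makes $G$ a subface of $F(v)$ automatically. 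Your own computation $\|\hat\lambda d\|_v\le\hat\lambda$ then closes the bound.
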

\begin{proof} %Lemma~\ref{lemma.lp.sol.loc} 
This proof is a modification of the proof of~\cite[Proposition 1]{PenaR16}.  
Let $I:= I(x)$ and $J:=\{j\in\{1,\dots,n\}: \bar a_j \in F(v)\}$.  We will prove the following inequality that evidently implies~\eqref{eq.lp.sol.loc}:  
\begin{equation}\label{eq.lp.sol.loc.strong}
\Phi_v(\bar{A}) \le \max\{\lambda : \exists y,z\in \Delta_{n-1}, I(y) \subseteq I, I(z) \subseteq J, \bar{A}(y-z) = \lambda d\}.
\end{equation}
To that end, observe that  the right-hand side in~\eqref{eq.lp.sol.loc.strong} can be computed via the following primal-dual pair of linear programs
\begin{equation}\label{eq.primal}
\begin{array}{rl}
\dmin_{y_I,z_J,\lambda} & \lambda \\
& \bar A_I y_I - \bar A_J z_J - \lambda d = 0 \\
& \1_I\transp y_I =1\\
& \1_J\transp z_J = 1 \\
& y_I, z_J \ge 0.
\end{array}
\end{equation}
and 
\begin{equation}\label{eq.dual}
\begin{array}{rl}
\dmax_{p,t,\tau} & t + \tau \\
& \bar A_I\transp p \le t\1_I\\
& \bar A_J\transp p \ge -\tau \1_J \\
& \ip{d}{p} = 1.
\end{array}
\end{equation}
Observe that~\eqref{eq.primal} is feasible because in particular the triple $(y_I,z_J,\lambda)$ defined by taking $y_I = x_I$, any $z_J \ge 0$ such that $\bar u = \bar A_J z_J$ and $\1_J\transp z_J = 1$, and $\lambda = \|\bar A x - \bar u\| $ satisfies the constraints in~\eqref{eq.primal}. Furthermore,~\eqref{eq.primal} is evidently bounded since any feasible $(y_I,z_J,\lambda)$ must have $y_I$ and $z_J$ bounded and $\bar A_I y_I -\bar A_J z_J = \lambda  d$ with $d\ne 0$.  Therefore both~\eqref{eq.primal} and~\eqref{eq.dual} attain their finite optimal values.
Let $(\hat y_I,\hat z_J,\hat \lambda)$ and $(\hat p,\hat t,\hat \tau)$ be optimal solutions to~\eqref{eq.primal} and~\eqref{eq.dual} respectively. 
Then $\hat \lambda = \|\bar A_I \hat y_I - \bar A_J \hat z_J\|$.   By  complementary slackness, $\hat z_j > 0$ if and only if $\bar a_j \in \Argmin_{\ell\in J} \ip{\bar a_\ell}{\hat p}$. 
 Likewise, $\hat w_i > 0$ if and only if 
$\bar a_i \in \Argmax_{\ell\in I} \ip{\bar a_\ell}{\hat p}$.
 Therefore $\bar A_Jz_J \in G:= \Argmin_{\bar u\in F}\ip{\hat p}{\bar u} \in \faces(F)$ and $\bar A_I \hat y_I \in \conv(\bar A\setminus G)$.  To finish, observe that
 \begin{align*}
 \Phi_v(\bar A) &\le \dist_v(G,\bar A\setminus G)\\
 &\le \|\bar A_I \hat y_I - \bar A_J\hat z_J\|_v \\
% = \lambda \\
 & = \max\{\lambda : \exists y,z\in \Delta_{n-1}, I(y) \subseteq I, I(z) \subseteq J, \bar{A}(y-z) = \lambda d\}.
 \end{align*}
\end{proof}
%Thus we defer its proof to the Appendix.

\begin{lemma}\label{lemma.error.bound.loc}  Let $\bar{A}\in \R^{(m+1)\times n}$ and $v\in\R^m$.   Then for all $\bar{u}\in F(v)$ and $x\in \Delta_{n-1}$ 
\begin{equation}\label{eq.error.bound.loc}
\min\{\|x-z\|_1: z \in \Delta_{n-1}, \bar{A}z = \bar{u}\} \le \frac{2\|\bar{A}x-\bar{u}\|_v}{\Phi_v(\bar{A})}.
\end{equation}
\end{lemma}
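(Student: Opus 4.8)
The statement is a Hoffman-type error bound: it controls the $\ell_1$-distance from a simplex point $x$ to the fiber $Z_{\bar u}:=\{z\in\Delta_{n-1}:\bar Az=\bar u\}$ by the ``residual'' $\|\bar Ax-\bar u\|_v$, with the local facial distance $\Phi_v(\bar A)$ playing the role of a reciprocal Hoffman constant. The plan is to derive it from Lemma~\ref{lemma.lp.sol.loc}, which at any infeasible point supplies a feasible direction pointing toward $\bar u$.

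I would first dispose of the easy case. Since $\bar u\in F(v)\subseteq\conv(\bar A)$, the fiber $Z_{\bar u}$ is nonempty, and any $z\in Z_{\bar u}$ satisfies $\|x-z\|_1\le\|x\|_1+\|z\|_1=2$; so if $\|\bar Ax-\bar u\|_v\ge\Phi_v(\bar A)$ the left side of~\eqref{eq.error.bound.loc} is at most $2\le 2\|\bar Ax-\bar u\|_v/\Phi_v(\bar A)$, and if $\bar Ax=\bar u$ it is $0$. Hence I may assume $0<\|\bar Ax-\bar u\|_v<\Phi_v(\bar A)$.

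Set $d:=(\bar Ax-\bar u)/\|\bar Ax-\bar u\|_v$. The set of scalars $\lambda$ for which there are $y,z\in\Delta_{n-1}$ with $I(y)\subseteq I(x)$ and $\bar A(y-z)=\lambda d$ is an interval containing $0$ (witnessed by $y=z=x$), and by Lemma~\ref{lemma.lp.sol.loc} it also contains $\Phi_v(\bar A)$; so I may pick $y,z\in\Delta_{n-1}$ with $I(y)\subseteq I(x)$ and $\bar A(y-z)=\Phi_v(\bar A)\,d$. Put $\alpha:=\|\bar Ax-\bar u\|_v/\Phi_v(\bar A)\in(0,1)$, so $\bar A\bigl(x-\alpha(y-z)\bigr)=\bar u$, and walk from $x$ along $x(t):=x-t(y-z)$. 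One checks $\1\transp x(t)=1$ for every $t$, and for $i\notin I(x)$ one has $x_i=y_i=0$ hence $x(t)_i=tz_i\ge0$; so $x(t)\ge0$ for all small $t>0$. Let $\theta$ be the largest $t\in[0,\alpha]$ with $x(t)\ge0$, so $\theta\in(0,\alpha]$. If $\theta=\alpha$, then $w:=x(\alpha)$ lies in $Z_{\bar u}$ and $\|x-w\|_1=\alpha\|y-z\|_1\le 2\alpha$, which is exactly the desired bound — here the factor $2$ enters through $\|y-z\|_1\le\|y\|_1+\|z\|_1=2$. Otherwise $\theta<\alpha$: some coordinate of $x(\theta)$ has just hit $0$, and the residual strictly contracts, since $\bar Ax(\theta)-\bar u=(1-\theta/\alpha)(\bar Ax-\bar u)$ and $s\mapsto\|s\bar w\|_v$ is increasing on $[0,\infty)$ for $\bar w\neq 0$. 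Then I would recurse with $x(\theta)$ in place of $x$, adding $\|x-x(\theta)\|_1\le 2\theta$ to a running total.

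The main obstacle is the bookkeeping for this recursion — arguing that it terminates and that the accumulated $\ell_1$-distance is at most $2\|\bar Ax-\bar u\|_v/\Phi_v(\bar A)$ and not a larger multiple. For termination, the coordinates that can become active in a step $x\mapsto x(\theta)$ all lie in the support of the corresponding $z$, which by the proof of Lemma~\ref{lemma.lp.sol.loc} is contained in the fixed index set $\{j:\bar a_j\in F(v)\}$; tracking the support of the iterate relative to this index set, together with the strict decrease of the residual, rules out an infinite loop. For the constant, writing $\rho_k:=\|\bar Ax^{(k)}-\bar u\|_v$ one has $\bar Ax^{(k+1)}-\bar u=c_k(\bar Ax^{(k)}-\bar u)$ with $c_k\in(0,1)$, so the residuals decay geometrically and the step lengths $\theta_k=(1-c_k)\rho_k/\Phi_v(\bar A)$ form a telescoping-type series; summing it to the sharp value $2\rho_0/\Phi_v(\bar A)$ — which uses that the last-coordinate contribution to $\|\cdot\|_v$ in~\eqref{eq.quasinorm} enters only under a square root — is the delicate point. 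Finally, since $\rho_k\to0$ the iterates converge to some $w\in Z_{\bar u}$ with $\|x-w\|_1\le\sum_k\|x^{(k)}-x^{(k+1)}\|_1\le 2\|\bar Ax-\bar u\|_v/\Phi_v(\bar A)$, which is~\eqref{eq.error.bound.loc}.
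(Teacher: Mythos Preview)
Your overall strategy is a natural one and different from the paper's: you try to build an explicit polygonal path in $\Delta_{n-1}$ from $x$ to the fiber $Z_{\bar u}$ and bound its $\ell_1$-length. The paper instead argues by contradiction via a single linear program: maximize $t$ subject to $\bar A w = t d$, $x-w\in\Delta_{n-1}$, and $\|w\|_1\le 2t/\Phi_v(\bar A)$; if the optimum were below $\|\bar Ax-\bar u\|_v$, the lemma at the optimal point produces a strictly improving feasible direction. The key structural difference is that the paper measures progress \emph{linearly} via the scalar $t$ along the fixed direction $d$, whereas you measure it through the residual $\rho_k=\|\bar A x^{(k)}-\bar u\|_v$.

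That distinction is exactly where your ``delicate point'' becomes a genuine gap. Write $\bar w:=\bar Ax-\bar u$ and $\phi(s):=\|s\bar w\|_v=\sqrt{s^2\|w\|^2+s\,|\langle v,w\rangle+w_{m+1}|}$. The map $\phi$ is concave on $[0,\infty)$ with $\phi(0)=0$, so for $c\in(0,1)$ one has $\phi(cs)\ge c\,\phi(s)$; in your notation this is $\rho_{k+1}\ge c_k\rho_k$, the \emph{wrong} direction for the telescoping you need. Concretely, with two steps $s_0=1,\ s_1=c,\ s_2=0$ the sum $(1-c)\rho_0+\rho_1=(1-c)\phi(1)+\phi(c)$ exceeds $\phi(1)=\rho_0$ whenever $|\langle v,w\rangle+w_{m+1}|>0$, so $\sum_k 2\theta_k$ can strictly exceed $2\rho_0/\Phi_v(\bar A)$. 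Your scheme therefore cannot deliver the constant $2$; at best, using $\phi(s)\le\sqrt{s}\,\phi(1)$ and a square-root telescoping one gets $\sum_k(1-c_k)\rho_k\le 2\rho_0$, yielding the bound $4\|\bar Ax-\bar u\|_v/\Phi_v(\bar A)$ rather than~\eqref{eq.error.bound.loc}. Your remark that the square root ``helps'' has the sign reversed: it is precisely the lack of positive homogeneity of $\|\cdot\|_v$ that breaks the telescope. The termination argument is also incomplete: your interval construction of $(y,z)$ with $\bar A(y-z)=\Phi_v(\bar A)d$ interpolates with $(x,x)$, so you lose the containment $I(z)\subseteq\{j:\bar a_j\in F(v)\}$ from the proof of Lemma~\ref{lemma.lp.sol.loc}, and support indices can be both lost and gained at each step; nothing prevents infinitely many boundary hits with $c_k\uparrow 1$ and $s_k\not\to 0$, in which case the limit point is not in $Z_{\bar u}$. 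The paper's LP formulation sidesteps both issues at once by tying $\|w\|_1$ to the linear parameter $t$ rather than to $\|\cdot\|_v$.
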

\begin{proof}
Suppose $\bar{A}x \ne \bar{u}$ as otherwise there is nothing to show.  To prove~\eqref{eq.error.bound.loc} we  proceed by contradiction.  Assume
\begin{equation}\label{eq.contra.loc}
z\in \Delta_{n-1}, \bar{A}z = \bar{u} \Rightarrow \|x-z\|_1 > \frac{2\|\bar{A}x-\bar{u}\|_v}{\Phi_v(\bar{A})}.
\end{equation}
Let $d:= \frac{\bar{A}x-\bar{u}}{\|\bar{A}x-\bar{u}\|_v}$
 and consider the following linear program
\begin{equation}\label{eq.lp.loc}
\begin{array}{rl}
\dmax_{w,t} & t \\
& \bar{A}w = t d \\
& x-w \in \Delta_{n-1} \\
&\|w\|_1 \le \dfrac{2t}{\Phi_v(\bar{A})}.
\end{array}
\end{equation}
By~\eqref{eq.lp.sol.loc} there exist $y,z\in \Delta_{n-1}$ with $I(y) \subseteq I(x)$ and $\bar{A}(y-z) = \Phi_v(\bar{A}) d$.  Thus for $\delta > 0$ sufficiently small the linear program~\eqref{eq.lp.loc} has a feasible solution $(w,t) = \delta \cdot(y-z,\Phi_v(\bar{A}))$ with $t = \delta \cdot \Phi_v(\bar{A})>0$.  Assumption~\eqref{eq.contra.loc} thus implies that \eqref{eq.lp.loc} has an optimal solution $(\hat w,\hat t)$ with $0 < \hat t < \|\bar{A}x-\bar{u}\|_v$.  Let $\hat x:= x - \hat w \in \Delta_{n-1}$.  Observe that $\bar{A} \hat x - \bar{u} = \bar{A}x-\bar{u} - \bar{A} \hat w = (\|\bar{A}x-\bar{u}\|_v - \hat t)d \ne 0$ and hence $d = \frac{\bar A \hat x - \bar{u}}{\|\bar{A} \hat x - \bar{u}\|_v}.$  Consider the modification of~\eqref{eq.lp.loc} obtained by replacing $x$ with $\hat x$:
\begin{equation}\label{eq.lp.again.loc}
\begin{array}{rl}
\dmax_{w,t} & t \\
& \bar{A}w = t d \\
& \hat x-w \in \Delta_{n-1} \\
&\|w\|_1 \le \dfrac{2t}{\Phi_v(\bar{A})}.
\end{array}
\end{equation}
Proceeding as above, it follows that~\eqref{eq.lp.again.loc} has a feasible solution $(w',t')$ with $t' > 0$. This implies that $(\hat w + w',\hat t +t')$ is feasible for \eqref{eq.lp.loc} and $\hat t + t' > \hat t$ which contradicts the optimality of $(\hat w,\hat t)$  for \eqref{eq.lp.loc}.
\end{proof}

\begin{proof}[Proof of Theorem~\ref{thm.loc}]
 The optimality conditions for $\dmin_{u \in \conv(A)} f(u)$ imply that for all $u^\star \in \displaystyle\Argmin_{u\in \conv(A)} f(u)$ and all $u \in \conv(A)$
\begin{equation}\label{eq.opt.conds}
\ip{E\transp \nabla g(Eu^\star) + b}{u-u^\star} \ge 0.
\end{equation}
 Therefore if $u^\star,u' \in \displaystyle\Argmin_{u\in \conv(A)} f(u)$, the strong convexity of $g$ and~\eqref{eq.opt.conds} imply
\begin{align*}
\mu_g \|Eu^\star - Eu'\|^2 &\le \ip{\nabla g(Eu^\star) - \nabla g(Eu')}{Eu^\star- Eu'}\\
 &= \ip{E\transp \nabla g(Eu^\star) - E\transp \nabla g(Eu')}{u^\star- u'}  \\ &\le 0.
\end{align*}
Hence $Eu^* = Eu'$ whenever $u^\star,u' \in \displaystyle\Argmin_{u\in \conv(A)} f(u)$.  In particular, $v = \nabla g(Eu^\star)$ is the same for all $u^\star \in \displaystyle\Argmin_{u\in \conv(A)} f(u)$. Furthermore, from~\eqref{eq.opt.conds} it follows that $\bar u := \bar A z$ is the same for all $z\in Z^\star$ and $\bar u \in F(v)$. Lemma~\ref{lemma.error.bound.loc} implies that for all $x\in \Delta_{n-1}$ 
\begin{equation}\label{eq.norm}
\dist(x,Z^\star) = \min\{\|x-z\|: z\in\Delta_{n-1},\;\bar A z = \bar u \} \le \frac{2 \|\bar A x - \bar u\|_v}{\Phi_v(\bar A)}.
\end{equation}
Next, observe that the strong convexity of $g$ and~\eqref{eq.opt.conds} imply that for all $ x \in \Delta_{n-1}$ and $z\in Z^\star$
\begin{align}\label{eq.strong.conv}
f(Ax)-f^\star & = g(EAx) - g(EAz) + \ip{b}{Ax-Az} \notag\\
& \ge \ip{\nabla g(EAz)}{EAx-EAz} + \frac{\mu_g}{2}\|EAx-EAz\|^2 + \ip{b}{Ax-Az} \notag\\
& = \frac{\mu_g}{2} \left(\|EAx-EAz\|^2 
+ \frac{2}{\mu_g}\ip{E\transp\nabla g(EAz)+b}{Ax-Az}
\right) \notag\\
& = \frac{\mu_g}{2} \left(\|EAx-EAz\|^2 
+ \left\vert \ip{v}{EAx-EAz} + \ip{\frac{2}{\mu_g}b}{Ax-Az}\right\vert
\right) \notag\\
& = \frac{\mu_g}{2} \|\bar A x - \bar u\|_v^2.
\end{align}
Putting together~\eqref{eq.norm} and~\eqref{eq.strong.conv}  we get
\[
\mu^\star_{f,A} = \min_{ x\in \Delta_{n-1}\setminus Z^\star} \frac{2(f(Ax) - f^\star)}{\dist(x,Z^\star)^2} \ge 
\min_{ x\in \Delta_{n-1}\atop Ax \ne \bar u} \frac{\mu_g\cdot\|\bar A x - \bar u\|_v^2}{\dist(x,Z^\star)^2} \ge \frac{\mu_g \cdot\Phi_v(\bar A)^2}{4}.
\]
\end{proof}

\subsection{Proofs of Proposition~\ref{prop.diam.dist} and Proposition~\ref{prop.dist.gral}}\label{sec.proofs}

\begin{proof}[Proof of Proposition~\ref{prop.diam.dist}.] For $\diam(A)$ observe that
\begin{align*}
\diam(A) &= \max_{x,y\in\Delta_{n-1}} \|A(x-y)\|
%\\&
= \max_{x,y\in\Delta_{n-1}\atop \|x-y\|_1=2} \frac{2\|A(x-y)\|}{\|x-y\|_1} 
%\\&
= \max_{x,y\in\Delta_{n-1}\atop x\ne y} \frac{2\|A(x-y)\|}{\|x-y\|_1} 
\\&
= \max_{u \in \conv(A)\atop x\in\Delta_{n-1}\setminus Z(u)} \frac{2\|Ax-u\|}{\dist(x,Z(u))}. 
\end{align*} 
For $\Phi(A)$ we prove the two inequalities separately.  From Lemma~\ref{lemma.error.bound.loc} applied to $\bar A = \matr{A\\0}$ and $v = 0$ it follows that for all $u \in \conv(A)$ and $x\in \Delta_{n-1}$
\[
\dist(x,Z(u)) = \min\{\|x-z\|: z\in\Delta_{n-1},\; A z = u \} \le \frac{2\|Ax-u\|}{\Phi(A)}.
\]
Therefore $\Phi(A) \le \dmin_{u \in \conv(A)\atop x\in\Delta_{n-1}\setminus Z(u)}\frac{2\|Ax-u\|}{\dist(x,Z(u))}.$ For the reverse inequality, let $F\in \faces(\conv(A))$ be such that 
$\emptyset \ne F \ne \conv(A)$ and $\Phi(A) = \dist(F,\conv(A\setminus F))$.  Then $\Phi(A) = \|A\hat x  - \hat u\|$ for some $\hat u \in F$ and $\hat x \in \Delta_{n-1}$ with $A\hat x\in \conv(A\setminus F).$  Since 
$A\hat x\in \conv(A\setminus F)$, without loss of generality we may assume that $\hat x\in \Delta_{n-1}$ is chosen so that $i\in I(\hat x) \Rightarrow a_i\not \in F.$  Since $F$ is a face, it follows that $Az \in F \Rightarrow I(z) \cap I(\hat x) = \emptyset.$    In particular $\dist(\hat x,Z(\hat u)) = 2$ and thus
\[
\dmin_{u \in \conv(A)\atop x\in\Delta_{n-1}\setminus Z(u)}\frac{2\|Ax-u\|}{\dist(x,Z(u))} \le \frac{2\|A\hat x-\hat u\|}{\dist(\hat x,Z(\hat u))} = \|A\hat x-\hat u\| = \Phi(A).
\]
\end{proof}  

\begin{proof}[Proof of Proposition~\ref{prop.dist.gral}]  
For~\eqref{eq.general.diam} observe that \[
\max_{u\in \conv(A)\atop x\in \Delta_{n-1}\setminus Z(u)} \frac{\|Ax - v\|}{\dist(x,Z(u))} = 
\max_{x,y\in\Delta_{n-1}\atop x\ne y} \frac{\|A(x-y)\|}{\|x-y\|}
=
\max_{w\in\R^n\setminus\{0\}\atop\1\transp w = 0 } \frac{\|Aw\|}{\|w\|}.
\]
For~\eqref{eq.general.Phi}, observe that for all $x,y\in \Delta_{n-1}$ we have $x-y = \sum_{i=1}^n \ip{e_i}{x-y}e_i$ and hence
\[
\|x - y\| %= \left\|\sum_{i=1}^n (x_i-y_i)e_i \right\| 
\le \sum_{i=1}^n \vert \ip{e_i}{x-y} \vert \|e_i\| \le \dmax_{i=1,\dots,n} \|e_i\| \cdot \|x-y\|_1.
\]
Thus Proposition~\ref{prop.diam.dist} yields
\[
\dmin_{u\in \conv(A)\atop x\in \Delta_{n-1}\setminus Z(u)} \frac{2\|Ax - u\|}{\dist(x,Z(u))} \ge \dmin_{u\in \conv(A)\atop x\in \Delta_{n-1}\setminus Z(u)} \frac{2\|Ax - u\|}{\dmax_{i=1,\dots,n} \|e_i\| \cdot \dist_1(x,Z(u))} = \frac{\Phi(A)}{\dmax_{i=1,\dots,n} \|e_i\|}.
\]

\end{proof}  
%We conclude this section with the proof of Proposition~\ref{prop.diam.dist}.  

%%%%%%%%%%%%%%%%%%%%%%%%%%%%%%%%%%%%%%%%%%%%%%%%%%%%%%%%%%%%%%%%%%%%%%%%%%%%%%%%%%%%%%%
%
%New Convergence Results for Polytope Constrained Convex  Optimization Algorithms
%
%%%%%%%%%%%%%%%%%%%%%%%%%%%%%%%%%%%%%%%%%%%%%%%%%%%%%%%%%%%%%%%%%%%%%%%%%%%%%%%%%%%%%%%

\section{Linear convergence of first-order methods}
%Frank-Wolfe and projected gradient methods}
\label{sec.algos}

This section discusses linear convergence results for two first-order algorithms for the problem \eqref{eq.Prob} namely the Frank-Wolfe with away steps (Algorithm~\ref{alg.FWA}) and the projected gradient method (Algorithm~\ref{alg.proj.grad}).  
Linear convergence results for both algorithms have been previously established in~\cite{BeckS15, BeckT09, Nest13, PenaR16, LacoJ15} under suitable assumptions.   
The goal of this section is to illustrate the role of the relative condition number $\frac{L_{f,A}}{\mu^\star_{f,A}}$ in these linear convergence results.  
The role of the relative condition number $\frac{L_{f,A}}{\mu^\star_{f,A}}$ is akin to the role of the usual condition number $\frac{L_f}{\mu_f}$ in the linear convergence of the gradient descent algorithm for unconstrained convex minimization.

Both proofs of linear convergence rely on the following elementary observation.  If $a \le 0, \; b>0,$ and $\alpha_{\max} > 0$ then
\begin{equation}\label{quad.model}
\min_{\alpha \in [0,\alpha_{\max}]} \, a \alpha + \frac{b}{2} \alpha^2 = 
 \left\{ \begin{array}{ll} -\frac{a^2}{2b} & \text{if} \; \alpha_{\max} > -\frac{a}{b}\\ 
\alpha_{\max}\left(a+\frac{b}{2}\alpha_{\max} \right) \le \frac{a}{2} \alpha_{\max}& \text{if} \; \alpha_{\max} \le -\frac{a}{b}.\end{array} \right.
\end{equation}

\subsection{Frank-Wolfe algorithm with away steps}
Algorithm~\ref{alg.FWA} gives a description of the Frank-Wolfe algorithm with away steps for \eqref{eq.Prob}.  This version of the algorithm has been previously discussed in~\cite{BeckS15, PenaR16, LacoJ15}. 
Algorithm~\ref{alg.FWA} relies on the following notation.  Given $u = Ax \in \conv(A)$, let $I(x):=\{i\in \{1,\dots,n\}: \ip{e_i}{x} > 0\}$.  The set $I(x)$ describes the {\em support} of $u = Ax$, that is, the indices of the columns of $A$ that appear with positive weight in the convex combination $u = Ax$.

\begin{algorithm}
  \caption{Frank-Wolfe algorithm with away steps
    \label{alg.FWA}}
  \begin{algorithmic}[1]
\State Pick $x_0 \in \Delta_{n-1}$; put $u_0:= Ax_0;\; k:=0$
 \For{$k=0,1,2,\dots$}
\State $j := \displaystyle\argmin_{i=1,\dots,n} \ip{\nabla f(u_k)}{a_i};\; \ell := \displaystyle\argmax_{i\in I(x_k)}  \ip{\nabla f(u_k)}{a_i}$
 \If{$\ip{\nabla f(u_k)}{a_j-u_k} < \ip{\nabla f(u_k)}{u_k-a_{\ell}}$ or $\vert I(x_k)\vert =1$}
 % (regular step)
\State $v:=a_j - u_k;\; w:=e_j - x_k; \; {\alpha}_{\max} := 1\;\;\;\;\,$  \quad (regular step)
\quad \Else 
$\;\;$
\State  $v:= u_k - a_\ell; \; {w}:= x_k -e_{\ell}; \; \alpha _{\max} := \frac{\ip{e_\ell}{x_k}}{1-\ip{e_\ell}{x_k}}\;\;$  (away step) 
\EndIf 
\State  choose $\alpha _k  \in [0,\alpha_{\max}]$
\State $x_{k+1} := x_k + \alpha _k w ; \; \; u_{k+1} := u_k + \alpha _k v = Ax_{k+1}$ 
\EndFor
  \end{algorithmic}
\end{algorithm}
A critical detail in Algorithm~\ref{alg.FWA} is the choice of step size $\alpha_k$ in Step 9.  The construction of $L_{f,A}$ implies that for $\alpha \in [0,\alpha_{\max}]$
\[
f(u_k + \alpha v) \le f(u_k) + \alpha \ip{\nabla f(u_k)}{v} + \frac{L_{f,A} \alpha^2}{2} \|w\|_1^2 \le f(u_k) + \alpha \ip{\nabla f(u_k)}{v} + 2 L_{f,A} \alpha^2.
\] 
To simplify  our analysis of linear convergence we will assume that $\alpha_k$ in Step 9 is chosen via 
\begin{equation}\label{eq.step.size}
\alpha_k := \argmin_{\alpha \in [0,\alpha_{\max}]}
\left\{f(u_k) + \alpha \ip{\nabla f(u_k)}{v} + 2L_{f,A} \alpha^2 
\right\} 
=\min\left\{\alpha_{\max},-\frac{\ip{\nabla f(u_k)}{v}}{4 L_{f,A}} \right\}.
\end{equation}
This is evidently possible only in the ideal case when $L_{f,A}$ is known.  In the more realistic case when $L_{f,A}$ is not known, a standard backtracking procedure can be used to choose a constant $L > 0$ bounded above by a constant multiple of $L_{f,A}$ and such that the step size
$$
\alpha_k := \argmin_{\alpha \in [0,\alpha_{\max}]}\left\{f(u_k) + \alpha \ip{\nabla f(u_k)}{v} + 2L \alpha^2 
\right\}=\min\left\{\alpha_{\max},-\frac{\ip{\nabla f(u_k)}{v}}{4 L} \right\} 
$$
satisfies
\[
f(u_k + \alpha_k v) \le f(u_k) + \alpha_k \ip{\nabla_k f(u_k)}{v} + 2 L \alpha_k^2.
\]
Our ensuing analysis would then apply with $L_{f,A}$ replaced by a constant multiple of it.  For the remainder of this subsection we will assume that $\alpha_k$ is indeed chosen via~\eqref{eq.step.size}.  Combining this assumption and~\eqref{quad.model} applied to $a = \ip{\nabla f(u_k)}{v}$ and $b = 4L_{f,A}$ we obtain
\begin{equation}\label{eq.FWA.dec}
f(u_{k+1}) - f(u_k) \le 
 \left\{ \begin{array}{ll} -\frac{\ip{\nabla f(u_k)}{v}^2}{8L_{f,A}} & \text{if} \; \alpha_k < \alpha_{\max}\\ 
 \frac{\ip{\nabla f(u_k)}{v}}{2} \alpha_{\max}& \text{if} \; \alpha_k = \alpha_{\max}.\end{array} \right.
\end{equation}

 The following result provides the crux of the linear convergence of 
Algorithm~\ref{alg.FWA}.

\begin{lemma}\label{prop.non-drop} Suppose $\R^n$ is endowed with the $\ell_1$ norm $\|\cdot\|_1$, and $A\in\R^{m\times n}$ and $f:\R^n\rightarrow \R\cup\{\infty\}$ are such that $\mu_{f,A}^\star > 0.$ 
Then the direction $v$ chosen in Step 5 or Step 7 of Algorithm~\ref{alg.FWA} satisfies
\begin{equation}\label{eq.fwa.eq1}
\ip{\nabla f(u_k)}{v}^2 \ge \frac{\mu_{f,A}^\star}{2}(f(u_k) - f^\star)
\end{equation}
and
\begin{equation}\label{eq.fwa.eq2}
\ip{\nabla f(u_k)}{v} \le  f^\star-f(u_k).
\end{equation}
\end{lemma}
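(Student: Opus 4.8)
The plan is to treat the two inequalities in Lemma~\ref{prop.non-drop} separately, both via the standard Frank--Wolfe "away-step gap" argument specialized to the simplex parametrization. Throughout, write $u_k = Ax_k$, let $x^\star\in Z^\star$ be a minimizer-witness and $u^\star = Ax^\star$. The quantity $\ip{\nabla f(u_k)}{v}$ where $v$ is the direction chosen in Step~5 or Step~7 is, up to sign, the \emph{pairwise away-gap}: by the choices of $j$ and $\ell$ in Step~3 one has, for every $y\in\Delta_{n-1}$ with $I(y)\subseteq I(x_k)$ (in particular for $y = $ any point feasible for an away step and for $y = x^\star$ when $\dist$ is computed appropriately),
\[
-\ip{\nabla f(u_k)}{v} \;\ge\; \tfrac12\bigl(\ip{\nabla f(u_k)}{u_k - a_\ell} + \ip{\nabla f(u_k)}{a_j - u_k}\bigr)^{+} \;\ge\; \tfrac12\,\ip{\nabla f(u_k)}{A(x_k - y)}
\]
whenever $I(y)\subseteq I(x_k)$; more precisely $-\ip{\nabla f(u_k)}{v}\ge \ip{\nabla f(u_k)}{A(z-y)}/\|z-y\|_1$-type bounds after a normalization. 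The first step is to make this precise: show that for the chosen $v$,
\[
-\ip{\nabla f(u_k)}{v}\;\ge\;\frac{\ip{\nabla f(u_k)}{u_k - Ax^\star}}{\dist(x_k,Z^\star)},
\]
using the elementary fact that if $g:=\nabla f(u_k)$ and we decompose $x_k - x^\star = \sum_i \lambda_i (e_{i}^{+} - e_{i}^{-})$ into simplex edge directions supported on $I(x_k)$, then $\max_i \ip{g}{A(e_i^+ - e_i^-)}$ dominates the average $\ip{g}{A(x_k-x^\star)}/\sum_i\lambda_i$, and $\sum_i\lambda_i = \tfrac12\|x_k-x^\star\|_1 = \tfrac12\dist(x_k,Z^\star)$ after choosing the optimal $x^\star$. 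The case $|I(x_k)|=1$ (forced regular step) is handled separately since then $x_k$ is a vertex and the bound is direct.

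For \eqref{eq.fwa.eq2}, I would combine the previous display with convexity of $f$: $f^\star - f(u_k)\ge f(Ax^\star) - f(u_k)\ge \ip{\nabla f(u_k)}{Ax^\star - u_k}$. Since $\dist(x_k,Z^\star)\le 2$ always (simplex diameter in $\ell_1$), the display gives $-\ip{\nabla f(u_k)}{v}\ge \tfrac12\ip{\nabla f(u_k)}{u_k-Ax^\star}\ge \tfrac12(f(u_k)-f^\star)$, hence $\ip{\nabla f(u_k)}{v}\le \tfrac12(f^\star - f(u_k))\le f^\star - f(u_k)$ since $f(u_k)\ge f^\star$; I should double-check the constant but a clean factor-$2$ slack is available. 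For \eqref{eq.fwa.eq1}, I would use the definition of $\mu_{f,A}^\star$: $f(u_k) - f^\star \le \tfrac{1}{2}\,L\,\dist(x_k,Z^\star)^2$ is the wrong direction, so instead use $\mu_{f,A}^\star$-growth as an \emph{upper} bound on $\dist$: $\dist(x_k,Z^\star)^2 \le \tfrac{2}{\mu_{f,A}^\star}(f(u_k)-f^\star)$. Feeding this into the squared form of the gap bound,
\[
\ip{\nabla f(u_k)}{v}^2 \;\ge\; \frac{\ip{\nabla f(u_k)}{u_k - Ax^\star}^2}{\dist(x_k,Z^\star)^2}\;\ge\;\frac{(f(u_k)-f^\star)^2}{\frac{2}{\mu_{f,A}^\star}(f(u_k)-f^\star)}\;=\;\frac{\mu_{f,A}^\star}{2}(f(u_k)-f^\star),
\]
using $\ip{\nabla f(u_k)}{u_k - Ax^\star}\ge f(u_k)-f^\star\ge 0$ from convexity in the numerator.

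The main obstacle is the first precise lemma: verifying that the \emph{particular} $v$ produced by Algorithm~\ref{alg.FWA}'s branching rule (Steps~3--7) — which compares the FW-gap against the away-gap and picks one of two specific directions — satisfies $-\ip{\nabla f(u_k)}{v}\ge \ip{\nabla f(u_k)}{A(x_k - y)}/\dist(x_k,Z^\star)$ for the relevant $y$. This requires the standard but slightly fiddly argument that the max of the FW-gap $\ip{g}{u_k - a_j}$ and the away-gap $\ip{g}{a_\ell - u_k}$ is at least half of $\ip{g}{u_k - Ax^\star}/(\text{number of edge-steps})$, and that the branch actually taken achieves (at least) this max up to the factor governing $\alpha_{\max}$; one must also confirm the support condition $I(x^\star)\cap$ (away-vertex) interplay so that $x^\star$ is reachable, which is exactly why $\dist(x_k,Z^\star)$ and not a smaller quantity appears. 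Once that combinatorial/LP-flavored estimate is nailed down (it parallels the reasoning in Lemma~\ref{lemma.lp.sol.loc}), the rest is the short algebra above.
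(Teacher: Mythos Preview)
Your plan for \eqref{eq.fwa.eq1} is exactly the paper's argument, and the ``main obstacle'' you flag is much easier than you suggest. With $z^\star\in Z^\star$ attaining $\dist(x_k,Z^\star)=\|x_k-z^\star\|_1=:2\delta$, simply split $x_k-z^\star$ into its positive and negative parts: since $\1\transp(x_k-z^\star)=0$, both parts have $\ell_1$-mass $\delta$, so $x_k-z^\star=\delta(z-y)$ with $z,y\in\Delta_{n-1}$ and $I(z)\subseteq I(x_k)$. Then
\[
\ip{\nabla f(u_k)}{u_k-u^\star}=\delta\,\ip{\nabla f(u_k)}{A(z-y)}\le\delta\Bigl(\max_{i\in I(x_k)}\ip{\nabla f(u_k)}{a_i}-\min_{i}\ip{\nabla f(u_k)}{a_i}\Bigr)\le 2\delta\,\bigl|\ip{\nabla f(u_k)}{v}\bigr|,
\]
the last step because the branching rule guarantees $2\ip{\nabla f(u_k)}{v}\le\min_i\ip{\nabla f(u_k)}{a_i}-\max_{i\in I(x_k)}\ip{\nabla f(u_k)}{a_i}$. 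No LP-style reasoning or appeal to Lemma~\ref{lemma.lp.sol.loc} is needed; your subsequent algebra for \eqref{eq.fwa.eq1} is then correct and matches the paper.

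Your argument for \eqref{eq.fwa.eq2}, however, has a genuine sign error. From the gap bound you obtain $\ip{\nabla f(u_k)}{v}\le\tfrac12(f^\star-f(u_k))$, and you then claim $\tfrac12(f^\star-f(u_k))\le f^\star-f(u_k)$ ``since $f(u_k)\ge f^\star$''. But $f^\star-f(u_k)\le 0$, so halving it makes it \emph{larger}, not smaller; the inequality goes the wrong way and there is no ``factor-$2$ slack''. The paper instead observes directly that, by the branching in Step~4, one always has $\ip{\nabla f(u_k)}{v}\le\ip{\nabla f(u_k)}{a_j-u_k}$ (equality in a regular step, and in an away step the else-condition gives $\ip{\nabla f(u_k)}{u_k-a_\ell}\le\ip{\nabla f(u_k)}{a_j-u_k}$). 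Since $a_j$ minimizes $\ip{\nabla f(u_k)}{\cdot}$ over the vertices, $\ip{\nabla f(u_k)}{a_j-u_k}=\min_{u\in\conv(A)}\ip{\nabla f(u_k)}{u-u_k}\le\ip{\nabla f(u_k)}{u^\star-u_k}\le f^\star-f(u_k)$ by convexity. That one-line observation is what you are missing.
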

\begin{proof}  
The choice of regular versus away steps in Step 5 and Step 7 imply that if $u_k = Ax_k \in \conv(A)$ then
\[
2\ip{\nabla f(u_k)}{v} \le \min_{i=1,\dots,n} \ip{\nabla f(u_k)}{a_i} - \max_{i \in I(x_k)} \ip{\nabla f(u_k)}{a_i} \le 0.
\]
Let $z^\star \in Z^\star$ be such that $\|x_k-z^\star\|_1 = \dist(x_k,Z^\star)$ and $u^\star=Az^\star$.   
Observe that $x_k-z^\star = \delta(z-y)$ where $z,y\in \Delta_{n-1},\; I(z) \subseteq I(x_k),$ and $\delta:=\frac{\|x_k-z^\star\|_1}{2}\le 1$. Thus
\begin{align*}
\ip{\nabla f(u_k)}{u_k-u^\star} &= \ip{\nabla f(u_k)}{A(x_k-z^\star)} 
%\\&
= \delta \ip{\nabla f(u_k)}{A(z-y)} \\
&\le \delta\left(\max_{z\in\Delta_{n-1}\atop I(z)\subseteq I(x_k)} \ip{\nabla f(u_k)}{Az}- \min_{y \in \Delta_{n-1}} \ip{\nabla f(u_k)}{Ay}\right) \\
& = \delta\left(\max_{i \in I(x_k)} \ip{\nabla f(u_k)}{a_i}- \min_{i=1,\dots,n} \ip{\nabla f(u_k)}{a_i}\right) \\
&\le 2\delta \vert\ip{\nabla f(u_k)}{v}\vert.
\end{align*}
The construction of $\mu_{f,A}^\star$, convexity of $f$, and the latter inequality yield
\[
 0 \le \mu_{f,A}^\star \le \frac{2(f(u_k)-f^\star)}{\dist(z,Z^\star)^2} = \frac{f(u_k)-f^\star}{2\delta^2} \le \frac{\ip{\nabla f(u_k)}{u_k-u^\star}}{2\delta^2} \le \frac{|\ip{\nabla f(u_k)}{v}|}{\delta}.
\]
Therefore~\eqref{eq.fwa.eq1} follows.  On the other hand,  the choice of $v$ and convexity of $f$ yields
\[
\ip{\nabla f(u_k)}{v} \le \min_{u\in\conv(A)} \ip{\nabla f(u_k)}{u-u_k} \le \ip{\nabla f(u_k)}{u^\star-u_k} \le f^\star - f(u_k)
\]
and thus~\eqref{eq.fwa.eq2} follows as well.
\end{proof}
Once we are equipped with Lemma~\ref{prop.non-drop}, the following linear convergence result readily follows via a clever counting argument introduced in~\cite{LacoJ15} and subsequently used in~\cite{BeckS15,PenaR16}.  
To provide a full picture of this linear convergence result, the proof below briefly replicates the necessary material from~\cite{BeckS15,LacoJ15,PenaR16}.  

\begin{proposition}\label{thm.FWA}
Suppose $\R^n$ is endowed with the $\ell_1$ norm $\|\cdot\|_1$, and $A\in\R^{m\times n}$ and $f:\R^n\rightarrow \R\cup\{\infty\}$ are such that $\frac{L_{f,A}}{\mu_{f,A}^\star} <\infty.$ If $x_0$ is a vertex of $\Delta_{n-1}$ then the iterates generated by Algorithm~\ref{alg.FWA} satisfy
\[
f(u_k) - f^\star \le \left(1-
\min\left\{\frac{\mu_{f,A}^\star}{16L_{f,A}},\frac{1}{2}\right\}
\right)^{k/2} (f(u_0)-f^\star).
\]
\end{proposition}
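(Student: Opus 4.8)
The plan is to combine three ingredients that are already in place: the per-iteration decrease estimate~\eqref{eq.FWA.dec} (which comes from the step-size rule~\eqref{eq.step.size} via~\eqref{quad.model}), the two inequalities of Lemma~\ref{prop.non-drop}, and the ``good step/drop step'' counting argument of~\cite{LacoJ15}. Throughout I may assume $f(u_k) > f^\star$, since $f(u_j)$ is non-increasing in $j$ (every branch of~\eqref{eq.FWA.dec} has right-hand side $\le 0$ because $\ip{\nabla f(u_k)}{v}\le 0$), so once some iterate attains $f^\star$ the claimed bound is trivial; and when $f(u_k) > f^\star$ inequality~\eqref{eq.fwa.eq2} gives $\ip{\nabla f(u_k)}{v} \le f^\star - f(u_k) < 0$, so the quantities $\ip{\nabla f(u_k)}{v}$ appearing below are strictly negative. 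Also note that the hypothesis $\frac{L_{f,A}}{\mu_{f,A}^\star}<\infty$ means $0<L_{f,A}<\infty$ and $\mu_{f,A}^\star>0$ (the remaining degenerate reading forces $f$ affine on $\conv(A)$, where the statement is trivial).

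First I would handle the \emph{good steps}, meaning the iterations with either $\alpha_k < \alpha_{\max}$, or $\alpha_k = \alpha_{\max}$ in a regular step (so $\alpha_{\max}=1$). If $\alpha_k < \alpha_{\max}$, then~\eqref{eq.FWA.dec} gives $f(u_{k+1})-f(u_k) \le -\ip{\nabla f(u_k)}{v}^2/(8L_{f,A})$, and substituting~\eqref{eq.fwa.eq1} yields $f(u_{k+1})-f^\star \le \bigl(1-\tfrac{\mu_{f,A}^\star}{16L_{f,A}}\bigr)(f(u_k)-f^\star)$. If instead $\alpha_k = \alpha_{\max} = 1$ in a regular step, then~\eqref{eq.FWA.dec} gives $f(u_{k+1})-f(u_k) \le \tfrac12\ip{\nabla f(u_k)}{v}$, and~\eqref{eq.fwa.eq2} turns this into $f(u_{k+1})-f^\star \le \tfrac12(f(u_k)-f^\star)$. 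In both cases $f(u_{k+1})-f^\star \le c\,(f(u_k)-f^\star)$ with $c := 1-\min\bigl\{\tfrac{\mu_{f,A}^\star}{16L_{f,A}},\tfrac12\bigr\}\in[\tfrac12,1)$. The only iterations not of this type are away steps with $\alpha_k = \alpha_{\max}$, the \emph{drop steps}; for those I just record $f(u_{k+1}) \le f(u_k)$, which is the remaining branch of~\eqref{eq.FWA.dec} since $\tfrac{\alpha_{\max}}{2}\ip{\nabla f(u_k)}{v}\le 0$.

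Next I would run the counting argument. Since $x_0$ is a vertex of $\Delta_{n-1}$, $|I(x_0)| = 1$. Tracking the support: a regular step takes $I(x_k)$ to $I(x_k)\cup\{j\}$ when $\alpha_k<1$ and to $\{j\}$ when $\alpha_k=1$; an away step leaves $I(x_{k+1})=I(x_k)$ when $\alpha_k<\alpha_{\max}$ and takes it to $I(x_k)\setminus\{\ell\}$ when $\alpha_k=\alpha_{\max}$ (then $\ip{e_\ell}{x_{k+1}}=0$). Hence every good step has $|I(x_{k+1})|\le|I(x_k)|+1$ and every drop step has $|I(x_{k+1})|\le|I(x_k)|-1$. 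If among the first $k$ iterations there are $g$ good steps and $b$ drop steps, then $1 \le |I(x_k)| \le 1+g-b$, so $b\le g$ and therefore $g\ge k/2$. Applying the good-step contraction $g$ times and the drop-step monotonicity $b$ times, and using $0\le c<1$ together with $g\ge k/2$, gives $f(u_k)-f^\star \le c^{\,g}(f(u_0)-f^\star) \le c^{\,k/2}(f(u_0)-f^\star)$, which is precisely the stated bound.

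The computations in the first two paragraphs are routine once~\eqref{eq.FWA.dec} and Lemma~\ref{prop.non-drop} are granted; the step that most needs care is the bookkeeping of the third paragraph --- verifying that the three cases (a non-maximal step, a regular step with $\alpha_k=\alpha_{\max}=1$, an away step with $\alpha_k=\alpha_{\max}$) are exhaustive, and pinning down exactly how each affects $|I(x_k)|$ so that $g\ge k/2$ is justified. The only other subtlety is the trivial handling of the degenerate cases where some iterate already equals $f^\star$, which as noted above makes the conclusion immediate.
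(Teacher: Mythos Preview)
Your proposal is correct and follows essentially the same approach as the paper: combine the per-iteration decrease~\eqref{eq.FWA.dec} with the two inequalities of Lemma~\ref{prop.non-drop}, then apply the good-step/drop-step counting argument from~\cite{LacoJ15}. The only cosmetic difference is the case split --- the paper uses three cases ($\alpha_k<\alpha_{\max}$; $\alpha_k=\alpha_{\max}\ge 1$; $\alpha_k=\alpha_{\max}<1$) whereas you split into ``good steps'' versus ``drop steps'' --- but the resulting contraction factors and the support-counting inequality $b\le g$ are identical.
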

\begin{proof} Consider separately the three possible cases that can occur at iteration $k$, namely $\alpha_k < \alpha_{\max}$, $\alpha_k = \alpha_{\max} \ge 1,$ and $\alpha_k = \alpha_{\max} < 1.$

\medskip

\noindent
{\bf Case 1:} $\alpha_k < \alpha_{\max}$. In this case $|I(x_{k+1})| \le |I(x_k)|+1$.  Furthermore,~\eqref{eq.FWA.dec} and~\eqref{eq.fwa.eq1} imply that
\[
f(u_{k+1}) - f(u_k) \le -\frac{\mu^\star_{f,A}}{16 L_{f,A}}(f(u_k) - f^\star).
\]

\medskip

\noindent
{\bf Case 2:} $\alpha_k = \alpha_{\max} \ge 1$.  In this case $|I(x_{k+1})| \le |I(x_k)|$.  Furthermore,~\eqref{eq.FWA.dec} and~\eqref{eq.fwa.eq2} imply that
\[
f(u_{k+1}) - f(u_k) \le -\frac{1}{2}(f(u_k) - f^\star).
\]

\medskip

\noindent
{\bf Case 3:} $\alpha_k = \alpha_{\max} < 1$.  In this case $|I(x_{k+1})| \le |I(x_k)|-1$.  Furthermore,~\eqref{eq.FWA.dec} implies that
\[
f(u_{k+1}) - f(u_k) \le 0.
\]
Therefore to finish it suffices to show that in the first $k$ iterations Case 3 can occur at most $k/2$ times.  Since $|I(x_0)| = 1$ and $|I(x_i)| \ge 1$ for $i=1,2,\dots,$ it follows that for each iteration when Case 3 occurred
%,  and thus the size of $I(x_k)$ decreased by at least one, 
there must have been at least one previous iteration when Case 1 occurred. % so that the size of $I(x_k)$ increased by at least one as well.  
Hence in the first $k$ iterations Case 3 could occur at most $k/2$ times.
\end{proof}

%
%Local Smooth Frank-Wolfe
%
\subsection{Projected gradient}

Algorithm~\ref{alg.proj.grad} gives a description of the projected gradient algorithm for \eqref{eq.Prob}.  This version can be seen as a particular case of more general gradient schemes like those discussed in~\cite{BeckT09,Nest13}.

\begin{algorithm}
  \caption{Projected gradient algorithm
    \label{alg.proj.grad}}
  \begin{algorithmic}[1]
\State Pick $x_0 \in \Delta_{n-1}$; 
 \For{$k=0,1,2,\dots$}
 \State choose $L > 0$ 
\State $x_{k+1} = \dargmin_{x\in \Delta_{n-1}}\left\{ f(Ax_k)+\ip{\nabla f(Ax_k)}{A(x-x_k)} + \frac{L}{2}\|x-x_k\|_2^2\right\}$
\EndFor
  \end{algorithmic}
\end{algorithm}

Like the choice of step size $\alpha_k$ in Step 9 of Algorithm~\ref{alg.FWA}, the choice of $L$ in Step 3 is a critical detail in Algorithm~\ref{alg.proj.grad}. The construction of $L_{f,A}$ implies that 
\[
f(Ax) \le f(Ax_k) + \ip{\nabla f(Ax_k)}{A(x-x_k)} + \frac{L_{f,A}}{2}\|x-x_k\|_2^2.
\]
To simplify our analysis of linear convergence we will assume that Step 3 chooses  $L = L_{f,A}.$  This is possible only if $L_{f,A}$ is known.  In the more realistic case when $L_{f,A}$ is not known, a standard backtracking procedure can be used to choose $L$  bounded above by a constant multiple of $L_{f,A}$ and such that the next iterate $x_{k+1} \in \Delta_{n-1}$ chosen at Step 4 satisfies
\[
f(Ax_{k+1}) \le  f(Ax_k) + \ip{\nabla f(Ax_k)}{A(x_{k+1}-x_k)} + \frac{L}{2}\|x_{k+1}-x_k\|_2^2.
\]
Our ensuing analysis would then apply with $L_{f,A}$ replaced by a constant multiple of it.
The  assumption that $L = L_{f,A}$ in Step 3 of Algorithm~\ref{alg.proj.grad} combined with~\eqref{quad.model} readily imply that for all $z\in \Delta_{n-1}$ such that $\ip{\nabla f(Ax_k)}{A(z-x_k)} \le 0$
\begin{align}
f(Ax_{k+1}) - f(Ax_k) &\le \min_{\alpha \in [0,1]} 
\left(\ip{\nabla f(Ax_k)}{A(z-x_k)}\alpha + \frac{L_{f,A}}{2}\|z-x_k\|_2^2\alpha^2\right)\notag \\
& \le \left\{ 
 \begin{array}{ll} -\frac{\ip{\nabla f(Ax_k)}{A(z-x_k)}^2}{2L_{f,A}\|z-x_k\|^2} & \text{if} \; -\frac{\ip{\nabla f(Ax_k)}{A(z-x_k)}}{L_{f,A}\|z-x_k\|^2} < 1\\ 
 \frac{\ip{\nabla f(Ax_k)}{A(z-x_k)}}{2} & \text{if} \; -\frac{\ip{\nabla f(Ax_k)}{A(z-x_k)}}{L_{f,A}\|z-x_k\|^2} \ge 1.\end{array}
\right.\label{eq.proj.grad.dec}
\end{align}

\begin{proposition}\label{thm.simple}
Suppose $\R^n$ is endowed with the $\ell_2$ norm $\|\cdot\|_2$, and $A\in\R^{m\times n}$ and $f:\R^n\rightarrow \R\cup\{\infty\}$ are such that $\frac{L_{f,A}}{\mu_{f,A}^\star} <\infty.$
 Then the sequence of iterates $\{x_k, \; k=0,1,\dots\}$ generated by Algorithm~\ref{alg.proj.grad} satisfy
\begin{equation}\label{eq.simple.rate}
f(Ax_k) - f^\star \le \left(1- \min\left\{\frac{\mu^\star_{f,A}}{4L_{f,A}},\frac{1}{2}\right\}\right)^k (f(Ax_0) - f^\star).
\end{equation}
\end{proposition}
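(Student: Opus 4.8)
The plan is to establish a one-step contraction $f(Ax_{k+1}) - f^\star \le \rho\,(f(Ax_k) - f^\star)$ with $\rho := 1 - \min\left\{\tfrac{\mu^\star_{f,A}}{4L_{f,A}}, \tfrac12\right\}$ and then iterate; this is in the spirit of the proof of Proposition~\ref{thm.FWA} but is considerably simpler, since the projected gradient step stays in $\Delta_{n-1}$ by construction and no combinatorial counting is needed. The only freedom to exploit is the choice of competitor point $z\in\Delta_{n-1}$ inserted into the already-established descent inequality~\eqref{eq.proj.grad.dec}, and the natural choice is the point of $Z^\star$ closest to $x_k$.

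First I would fix an iteration $k$. If $f(Ax_k) = f^\star$ the claimed bound is immediate, because $x_{k+1}$ minimizes over $\Delta_{n-1}$ a quadratic model whose value at $x_k$ equals $f(Ax_k)$, forcing $f(Ax_{k+1}) \le f(Ax_k) = f^\star$. So assume $f(Ax_k) > f^\star$ and let $z^\star \in Z^\star$ attain $\|x_k - z^\star\|_2 = \dist(x_k, Z^\star)$; this minimum is attained since $Z^\star$ is a nonempty compact subset of $\Delta_{n-1}$, and $\dist(x_k, Z^\star) > 0$ because $x_k \notin Z^\star$. Put $u^\star := Az^\star$ and $g_k := f(Ax_k) - f^\star > 0$. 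Convexity of $f$ gives $\ip{\nabla f(Ax_k)}{A(z^\star - x_k)} = \ip{\nabla f(Ax_k)}{u^\star - Ax_k} \le f^\star - f(Ax_k) = -g_k < 0$, so $z^\star$ is an admissible competitor in~\eqref{eq.proj.grad.dec}; in particular $-\ip{\nabla f(Ax_k)}{A(z^\star - x_k)} \ge g_k$. Moreover the definition of $\mu^\star_{f,A}$ in~\eqref{eq.strong.conv.local} yields $\|x_k - z^\star\|_2^2 = \dist(x_k, Z^\star)^2 \le \tfrac{2g_k}{\mu^\star_{f,A}}$.

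Next I would split into the two cases of~\eqref{eq.proj.grad.dec} applied with $z = z^\star$. In the first case the decrease is bounded above by $-\dfrac{\ip{\nabla f(Ax_k)}{A(z^\star - x_k)}^2}{2L_{f,A}\|z^\star - x_k\|_2^2}$; inserting $\ip{\nabla f(Ax_k)}{A(z^\star - x_k)}^2 \ge g_k^2$ in the numerator and $\|z^\star - x_k\|_2^2 \le 2g_k/\mu^\star_{f,A}$ in the denominator shows $f(Ax_{k+1}) - f(Ax_k) \le -\dfrac{\mu^\star_{f,A}}{4L_{f,A}}\,g_k$. In the second case the decrease is bounded above by $\tfrac12\ip{\nabla f(Ax_k)}{A(z^\star - x_k)} \le -\tfrac12 g_k$. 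In either case $f(Ax_{k+1}) - f^\star \le \rho\,(f(Ax_k) - f^\star)$, and iterating over $k = 0, 1, \dots$ gives~\eqref{eq.simple.rate}.

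I expect no serious obstacle: the argument is essentially a direct substitution into~\eqref{eq.proj.grad.dec}. The only points requiring minor care are (i) verifying that $z^\star$ satisfies the sign hypothesis $\ip{\nabla f(Ax_k)}{A(z^\star - x_k)} \le 0$ needed to invoke~\eqref{eq.proj.grad.dec}, which is exactly convexity of $f$, and (ii) dispatching the degenerate case $f(Ax_k) = f^\star$ separately, since the case analysis divides by $\|z^\star - x_k\|_2$.
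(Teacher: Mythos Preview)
Your proposal is correct and follows essentially the same approach as the paper's own proof: choose $z^\star\in Z^\star$ nearest to $x_k$, combine the convexity bound $\ip{\nabla f(Ax_k)}{A(z^\star-x_k)}\le -(f(Ax_k)-f^\star)$ with the growth bound $\|x_k-z^\star\|_2^2\le 2(f(Ax_k)-f^\star)/\mu^\star_{f,A}$, and feed both into~\eqref{eq.proj.grad.dec}. Your version is slightly more explicit in separating the degenerate case $f(Ax_k)=f^\star$ and in writing out the two branches of~\eqref{eq.proj.grad.dec}, but there is no substantive difference.
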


\begin{proof}
At iteration $k$ let $z\in Z^\star$ be such that $\|z-x_k\|_2^2\le \dfrac{2(f(Ax_k) - f^\star)}{\mu^\star_{f,A}}$.  The convexity of $f$ yields
\[
\ip{\nabla f(Ax_k)}{A(z - x_k)} \le f^\star - f(Ax_k) \le 0
\]
and
\[
\frac{\ip{\nabla f(Ax_k)}{A(z - x_k)}^2}{\|z-x_k\|_2^2} \ge \frac{\mu_{f,A}^\star}{2}(f(Ax_k) - f^\star)
\]
Combining these inequalities and~\eqref{eq.proj.grad.dec} it follows that
\[
f(Ax_{k+1}) - f(Ax_{k}) \le -\min\left\{\frac{\mu^\star_{f,A}}{4L_{f,A}},\frac{1}{2}\right\}(f(Ax_k) - f^\star).
\]
Therefore~\eqref{eq.simple.rate} follows by induction.
\end{proof}

\section*{Acknowledgements}

This research has been  funded by NSF grant CMMI-1534850.

\bibliographystyle{plain}
%\bibliography{RelConditionRefs}

\begin{thebibliography}{10}

\bibitem{BeckS15}
A.~Beck and S.~Shtern.
\newblock Linearly convergent away-step conditional gradient for non-strongly
  convex functions.
\newblock {\em Mathematical Programming}, 164:1--27, 2017.

\bibitem{BeckT09}
A.~Beck and M.~Teboulle.
\newblock A fast iterative shrinkage-thresholding algorithm for linear inverse
  problems.
\newblock {\em SIAM Journal of Imaging Services}, 2009.

\bibitem{BubeLS15}
S.~Bubeck, Y.~Lee, and M.~Singh.
\newblock A geometric alternative to {N}esterov's accelerated gradient descent.
\newblock {\em arXiv preprint arXiv:1506.08187}, 2015.

\bibitem{DrusFR16}
D.~Drusvyatskiy, M.~Fazel, and S.~Roy.
\newblock An optimal first order method based on optimal quadratic averaging.
\newblock {\em To Appear in SIAM Journal on Optimization}, 2018.

\bibitem{KariV17}
S.~Karimi and S.~Vavasis.
\newblock A single potential governing convergence of conjugate gradient,
  accelerated gradient and geometric descent.
\newblock {\em arXiv preprint arXiv:1712.09498}, 2017.

\bibitem{LacoJ15}
S.~Lacoste-Julien and M.~Jaggi.
\newblock On the global linear convergence of {F}rank-{W}olfe optimization
  variants.
\newblock In {\em Advances in Neural Information Processing Systems (NIPS)},
  2015.

\bibitem{LuFN17}
H.~Lu, R.~Freund, and Y.~Nesterov.
\newblock Relatively-smooth convex optimization by first-order methods, and
  applications.
\newblock {\em To Appear in SIAM Journal on Optimization}, 2018.

\bibitem{MaGV17}
C.~Ma, N.~Gudapati, M.~Jahani, R.~Tappenden, and M.~Tak{\'a}{\v{c}}.
\newblock Underestimate sequences via quadratic averaging.
\newblock {\em arXiv preprint arXiv:1710.03695}, 2017.

\bibitem{NecoNG18}
I.~Necoara, Y.~Nesterov, and F.~Glineur.
\newblock Linear convergence of first order methods for non-strongly convex
  optimization.
\newblock {\em To Appear in Mathematical Programming}, 2018.

\bibitem{Nest04}
Y.~Nesterov.
\newblock {\em Introductory Lectures on Convex Optimization: A Basic Course}.
\newblock Applied Optimization. Kluwer Academic Publishers, 2004.

\bibitem{Nest13}
Y.~Nesterov.
\newblock Gradient methods for minimizing composite functions.
\newblock {\em Mathematical Programming}, 140(1):125--161, 2013.

\bibitem{PenaR16}
J.~Pe{\~n}a and D.~Rodr\'iguez.
\newblock Polytope conditioning and linear convergence of the {F}rank-{W}olfe
  algorithm.
\newblock {\em To Appear in Mathematics of Operations Research}, 2018.

\end{thebibliography}

\end{document}